\documentclass{amsart}

\usepackage[foot]{amsaddr}

\usepackage[breaklinks,bookmarks=false]{hyperref}
\hypersetup{%
colorlinks,%
linkcolor=blue,%
citecolor=blue,%
urlcolor=blue,%
plainpages=false,%
pdfwindowui=false,%
pdfstartview={FitH}%
}

\usepackage{pgfplots}
\usepackage{xspace}
\usepackage{subcaption}

\usepackage{amssymb}
\usepackage{mathrsfs}


\newcommand{\eq}{:=}


\newcommand{\grad}{\boldsymbol \nabla}
\renewcommand{\div}{\grad \cdot}
\newcommand{\curl}{\grad \times}

\newcommand{\ccurl}{\boldsymbol{\operatorname{curl}}}
\newcommand{\ddiv}{\operatorname{div}}


\newcommand{\BA}{\boldsymbol A}

\newcommand{\BH}{\boldsymbol H}

\newcommand{\BJ}{\boldsymbol J}

\newcommand{\BL}{\boldsymbol L}

\newcommand{\BS}{\boldsymbol S}

\newcommand{\ba}{\boldsymbol a}
\newcommand{\bb}{\boldsymbol b}
\newcommand{\bc}{\boldsymbol c}

\newcommand{\bg}{\boldsymbol g}

\newcommand{\bl}{\boldsymbol l}

\newcommand{\bn}{\boldsymbol n}
\newcommand{\bo}{\boldsymbol o}
\newcommand{\bp}{\boldsymbol p}

\newcommand{\bu}{\boldsymbol u}
\newcommand{\bv}{\boldsymbol v}
\newcommand{\bw}{\boldsymbol w}
\newcommand{\bx}{\boldsymbol x}


\newcommand{\CE}{\mathcal E}
\newcommand{\CF}{\mathcal F}

\newcommand{\CH}{\mathcal H}

\newcommand{\CO}{\mathcal O}
\newcommand{\CP}{\mathcal P}

\newcommand{\CR}{\mathcal R}

\newcommand{\CT}{\mathcal T}

\newcommand{\CV}{\mathcal V}


\newcommand{\LU}{\mathscr U}




\newcommand{\BCH}{\boldsymbol{\CH}}

\newcommand{\BCP}{\boldsymbol{\CP}}

\newcommand{\BCR}{\boldsymbol{\CR}}




\newcommand{\RT}{\boldsymbol{RT}}
\newcommand{\ND}{\boldsymbol{N}}

\newcommand{\edge}{\ell}

\newcommand{\psie}{\boldsymbol \psi_\edge}
\newcommand{\taue}{\boldsymbol \tau_\edge}
\newcommand{\sige}{\boldsymbol \sigma_\edge}
\newcommand{\sigeh}{\boldsymbol \sigma_{\edge,h}}

\newcommand{\ome}{\omega_\edge}
\newcommand{\CTe}{\CT_h^\edge}

\newcommand{\he}{h_\edge}
\newcommand{\pe}{p_\edge}
\newcommand{\qe}{q_\edge}

\newcommand{\CPe}{C_{{\rm P},\edge}}
\newcommand{\Cconte}{C_{{\rm cont},\edge}}
\newcommand{\Cste}{C_{{\rm st},\edge}}
\newcommand{\osce}{\operatorname{osc}_\edge}

\newcommand{\oscK}{\operatorname{osc}_K}

\newcommand{\Clift}{C_{{\rm L},\Omega}}

\newcommand{\GD}{\Gamma_{\rm D}}
\newcommand{\GN}{\Gamma_{\rm N}}

\newcommand{\cank}{\bu^k}

\newcommand{\BLba}{\boldsymbol \Lambda}

\newcommand{\psia}{\psi_{\ba}}
\newcommand{\psib}{\psi_{\bb}}

\newcommand{\bzero}{\bo}

\newcommand{\supp}{\operatorname{supp}}

\newcommand{\mmg}{{\tt mmg3D}\xspace}
\newcommand{\gmsh}{{\tt gmsh}\xspace}
\newcommand{\hmax}{h_{\rm max}}

\newcommand{\err}{\operatorname{err}_\Omega}
\newcommand{\effe}{\operatorname{eff}_{\rm edge}}
\newcommand{\effc}{\operatorname{eff}_{\rm cell}}

\newcommand{\SlopeTriangle}[6]
{

    \pgfplotsextra
    {
        \pgfkeysgetvalue{/pgfplots/xmin}{\xmin}
        \pgfkeysgetvalue{/pgfplots/xmax}{\xmax}
        \pgfkeysgetvalue{/pgfplots/ymin}{\ymin}
        \pgfkeysgetvalue{/pgfplots/ymax}{\ymax}

        \pgfmathsetmacro{\xArel}{#1}
        \pgfmathsetmacro{\yArel}{#3}
        \pgfmathsetmacro{\xBrel}{#1-#2}
        \pgfmathsetmacro{\yBrel}{\yArel}
        \pgfmathsetmacro{\xCrel}{\xArel}

        \pgfmathsetmacro{\lnxB}{\xmin*(1-(#1-#2))+\xmax*(#1-#2)} 
        \pgfmathsetmacro{\lnxA}{\xmin*(1-#1)+\xmax*#1} 
        \pgfmathsetmacro{\lnyA}{\ymin*(1-#3)+\ymax*#3} 
        \pgfmathsetmacro{\lnyC}{\lnyA+#4*(\lnxA-\lnxB)}
        \pgfmathsetmacro{\yCrel}{\lnyC-\ymin)/(\ymax-\ymin)} 

        \coordinate (A) at (rel axis cs:\xArel,\yArel);
        \coordinate (B) at (rel axis cs:\xBrel,\yBrel);
        \coordinate (C) at (rel axis cs:\xCrel,\yCrel);

        \draw[#6]   (A)--
                    (B)--
                    (C)-- node[anchor=east] {#5}
                    cycle;
    }
}

\newtheorem{theorem}{Theorem}
\newtheorem{lemma}[theorem]{Lemma}
\newtheorem{proposition}[theorem]{Proposition}
\newtheorem{remark}[theorem]{Remark}

\numberwithin{theorem}{section}
\numberwithin{equation}{section}
\numberwithin{figure}{section}

\title[\MakeLowercase{$p$}-robust estimators for the curl-curl problem]%
{A simple equilibration procedure leading to polynomial-degree-robust
a posteriori error estimators for the curl-curl problem}

\author{T. Chaumont-Frelet$^{\star,\dagger}$}
\address{\vspace{-.5cm}}
\address{\noindent \tiny \textup{$^\star$Inria, 2004 Route des Lucioles, 06902 Valbonne, France}}
\address{\noindent \tiny \textup{$^\dagger$Laboratoire J.A. Dieudonn\'e, Parc Valrose, 28 Avenue Valrose, 06108 Nice, France}}

\begin{document}

\maketitle

\begin{abstract}
We introduce two a posteriori error estimators for N\'ed\'elec finite element discretizations
of the curl--curl problem. These estimators pertain to a new Prager--Synge identity and an
associated equilibration procedure. They are reliable and efficient, and the error estimates
are polynomial-degree-robust. In addition, when the domain is convex, the reliability constants
are fully computable. The proposed error estimators are also cheap and easy to implement,
as they are computed by solving divergence-constrained minimization problems over edge patches.
Numerical examples highlight our key findings, and show that both estimators are suited to drive
adaptive refinement algorithms. Besides, these examples seem to indicate that guaranteed upper
bounds can be achieved even in non-convex domains.

\vspace{.5cm}
\noindent
{\sc Key words.}
A posteriori error estimates, Electromagnetics, Finite element methods, High order methods.
\end{abstract}

\section{Introduction}

Given a domain $\Omega$, a partition $\{\GD,\GN\}$ of its boundary,
and a divergence free field $\BJ: \Omega \to \mathbb R^3$, the curl--curl
problem consists in finding $\BA: \Omega \to \mathbb R^3$ such that
\begin{subequations}
\label{eq_curlcurl_strong}
\begin{alignat}{2}
&\curl \curl \BA = \BJ, \quad \div \BA = 0,               &\qquad&\text{ in $\Omega$}, \\
&\BA \times \bn = \bzero,                                 &\qquad&\text{ on $\GD$},    \\
&(\curl \BA) \times \bn = \bzero, \quad \BA \cdot \bn = 0 &\qquad&\text{ on $\GN$},
\end{alignat}
\end{subequations}
together with a finite set of suitable orthogonality conditions to filter
out harmonic forms when the topology is non-trivial \cite{fernandes_gilardi_1997a}.
This problem is the central model for magnetostatic applications and is the basis
of Maxwell's equations, which are instrumental in the modeling of electromagnetic
phenomena \cite{griffiths_1999a}.

In general geometries, numerical schemes are required to approximate the solution
to \eqref{eq_curlcurl_strong}. Here, we consider finite element methods
\cite{ciarlet_2002a,monk_2003a,nedelec_1980a}, and focus on a posteriori error
estimation for N\'ed\'elec elements. This topic is already largely covered in the literature
\cite{%
beck_hiptmair_hoppe_wohlmuth_2000a,%
braess_schoberl_2008a,%
chaumontfrelet_ern_vohralik_2021a,%
chaumontfrelet_vohralik_2021a,%
gedicke_geevers_perugia_2019a,%
nicaise_creuse_2003a,%
schoberl_2008a},
motivated by the variety of important applications as well as the mathematical
challenges involved.


A posteriori error estimators for the curl--curl problem where first proposed
in \cite{beck_hiptmair_hoppe_wohlmuth_2000a} for convex domains
(see \cite[Assumption 2]{beck_hiptmair_hoppe_wohlmuth_2000a} for details).
The estimator in \cite{beck_hiptmair_hoppe_wohlmuth_2000a} is of residual type,
and has been generalized to arbitrary polyhedral Lipschitz domains in
\cite{nicaise_creuse_2003a,schoberl_2008a}. While this approach provides
reliable and efficient estimators, it still suffers from two drawbacks, namely:
(i) the constants appearing in the reliability estimates are not computable in practice,
and (ii) the constants in the efficiency estimates deteriorate as the polynomial degree
$p$ is increased. Notice that this is not specific to the curl--curl problem, and
residual-based estimators exhibit the same downsides even in the simpler context of
scalar elliptic problems \cite{ainsworth_oden_2000a,melenk_wohlmuth_2001a}.

In this work, we focus on so-called ``equilibrated'' error estimators,
that have the ability to provide (i) guaranteed upper bounds free of
unknown constants and (ii) polynomial-degree-robust (or simply, $p$-robust)
efficiency constants. The concept of equilibrated flux can be traced back to the seminal work
of Prager and Synge \cite{prager_synge_1947a}, where the authors establish that
equilibrated fluxes can be employed to provide guaranteed error upper bounds.
For scalar elliptic problems, several constructions of equilibrated fluxes
have then been proposed, leading to practically usable error estimators, see e.g.
\cite{ainsworth_oden_2000a,ladeveze_leguillon_1983a,luce_wohlmuth_2004a}.
Here, we focus on the approach initially proposed in \cite{destuynder_metivet_1999a}
and later extended in \cite{ern_vohralik_2015a}. It relies on a partition of unity
via finite element shape functions and local mixed finite element problems, and lead to
$p$-robust estimates \cite{braess_pillwein_schoberl_2009a,ern_vohralik_2018a}.

For the curl--curl problem, the construction of equilibrated fluxes turns out to
be a much more arduous task than for scalar elliptic equations. A procedure for
the lowest-order N\'ed\'elec elements have been introduced early \cite{braess_schoberl_2008a},
but its generalization to arbitrary orders has only been proposed recently
\cite{gedicke_geevers_perugia_2019a}. Actually, to the best of the author's knowledge,
they are currently only two constructions of equilibrated estimator that lead to $p$-robust
estimates for the curl--curl problem, namely (i) the construction in
\cite{gedicke_geevers_perugia_schoberl_2020a} which is based on
\cite{gedicke_geevers_perugia_2019a}, and (ii) the approach in
\cite{chaumontfrelet_vohralik_2021a} which employs a partition of unity in the spirit of
\cite{destuynder_metivet_1999a,ern_vohralik_2015a}. While the
estimators in \cite{chaumontfrelet_vohralik_2021a} and
\cite{gedicke_geevers_perugia_schoberl_2020a} provide constant-free reliability estimates
and $p$-robust efficiency constants, there are not fully satisfactory as there construction is
complicated. Indeed, they hinge on over-constrained minimization problems, and require
several passes through the mesh. In addition, the patches involved in the efficiency estimates
are rather large.

Another approach called ``broken patchwise equilibration'' has been proposed in
\cite{chaumontfrelet_ern_vohralik_2021a}, where the authors introduce a $p$-robust
a posteriori error estimator for the curl--curl problem that do not rely on equilibration.
The key assets of this estimator is that it is cheap and straightforward to compute
(especially as compared to
\cite{chaumontfrelet_vohralik_2021a,gedicke_geevers_perugia_schoberl_2020a}),
and that the efficiency estimates are established on tight edge patches. On the other hand,
because the reliability estimate does not pertain to a Prager--Synge identity, the reliability
bound contains constants that are either unavailable, or cumbersome to compute in practice.

The equilibrated estimators of 
\cite{%
braess_schoberl_2008a,%
chaumontfrelet_vohralik_2021a,%
gedicke_geevers_perugia_2019a,%
gedicke_geevers_perugia_schoberl_2020a}
are based on the following Prager--Synge identity: if
$\BH \in \BH_{\GN}(\ccurl,\Omega)$ is any ``equilibrated field'' satisfying $\curl \BH = \BJ$,
then
\begin{equation}
\label{eq_prager_synge_curl}
(\curl(\BA-\BA_h),\curl \bv)_\Omega = (\BH-\curl\BA_h,\curl \bv)_\Omega,
\end{equation}
for any field $\BA_h,\bv \in \BH_{\GD}(\ccurl,\Omega)$
(the notations are rigorously introduced in Section \ref{section_setting} below),
leading to the estimate
\begin{equation}
\label{eq_prager_synge_curl_estimate}
\|\curl(\BA-\BA_h)\|_\Omega \leq \|\BH-\curl\BA_h\|_\Omega.
\end{equation}
This motivates the construction of equilibrated fields $\BH$ with prescribed curl.
Unfortunately, as previously mentioned, the construction of such fields is rather
involved
\cite{%
chaumontfrelet_vohralik_2021a,%
gedicke_geevers_perugia_2019a,%
gedicke_geevers_perugia_schoberl_2020a}.

Here, we introduce an alternative Prager--Synge identity that leads to
a much simpler equilibration procedure. It relies on the observation that whenever
$v \in H^1(\Omega)$, we have $\curl (v\cank) = \grad v \times \cank$, where $\{\cank\}_{k=1}^3$
is the canonical basis of $\mathbb R^3$. Simple manipulations then show the following identity:
if, for each $k \in \{1,2,3\}$, $\BS^k \in \BH_{\GN}(\ddiv,\Omega)$ satisfies
$\div \BS^k = \BJ_k$, we have
\begin{equation}
\label{eq_prager_synge_grad}
(\curl(\BA-\BA_h),\curl \bv)_\Omega
=
-\sum_{k=1}^3 (\BS^k+\cank \times \curl \BA_h,\grad \bv_k)_\Omega
\end{equation}
for all $\BA_h \in \BH_{\GD}(\ccurl,\Omega)$ and $\bv \in \BH^1_{\GD}(\Omega)$.
Interestingly, as opposed to \eqref{eq_prager_synge_curl}, identity \eqref{eq_prager_synge_grad}
only constrains the divergence of the equilibrated fields, simplifying the
equilibration process. On the other hand, the downside of \eqref{eq_prager_synge_grad}
is that it requires the gradient of the test function $\bv \in \BH^1_{\GD}(\Omega)$ instead
of its curl in \eqref{eq_prager_synge_curl}. As usual \cite{nicaise_creuse_2003a,schoberl_2008a},
this is remedied by employing ``regular decompositions'', leading to the estimate
\begin{equation}
\label{eq_prager_synge_grad_estimate}
\|\curl(\BA-\BA_h)\|_\Omega
\leq
\Clift
\left (
\sum_{k=1}^3 \|\BS^k+\cank \times \curl \BA_h\|_\Omega,
\right )^{1/2},
\end{equation}
where the constant $\Clift$ is one in convex domains (with $\GD = \partial \Omega$
or $\GN = \partial \Omega$), but is not practically computable in general
(see Section \ref{section_regular_decomposition}).


In this work, we carefully establish the new Prager--Synge identity
introduced in \eqref{eq_prager_synge_grad}, and we use it to design
and analyze two novel a posteriori error estimators for curl--curl problem
\eqref{eq_curlcurl_strong}. We show that they are reliable and efficient
with $p$-robust constants. We also provide a set of numerical experiments
showing that both estimators are suitable to drive adative mesh refinements
processes.
Our estimators share some similarities with the broken patchwise estimator
introduced in \cite{chaumontfrelet_ern_vohralik_2021a} and the equilibrated
estimators based on \eqref{eq_prager_synge_curl} provided in
\cite{chaumontfrelet_vohralik_2021a,gedicke_geevers_perugia_schoberl_2020a},
which we now describe.

As in \cite{chaumontfrelet_ern_vohralik_2021a}, our construction relies on a
vectorial partition of unity via edge functions (see Section \ref{section_partition_of_unity}),
which leads to a localization on tight edge patches. In contrast to
\cite{chaumontfrelet_ern_vohralik_2021a} however, the proposed approach has two
main assets. Indeed, (i) it uses divergence-constrained minimization problems which
are easier to implement and cheaper to solve than the curl-constrained problems employed
in \cite{chaumontfrelet_ern_vohralik_2021a}. Besides, (ii) the present approach hinges
on an equilibration principle, which leads to nicer upper bounds than in
\cite{chaumontfrelet_ern_vohralik_2021a}. The two approaches can be seen as dual to one
another in some sense, as we elaborate in Remark \ref{remark_broken_estimator}.


As compared to the usual equilibration procedures of
\cite{%
braess_schoberl_2008a,
chaumontfrelet_vohralik_2021a,%
gedicke_geevers_perugia_2019a,%
gedicke_geevers_perugia_schoberl_2020a}
based on \eqref{eq_prager_synge_curl}, the main drawback of the present
approach is that apart from convex domains, the upper bound contains
the unknown constant $\Clift$. While this appears as a major downside, numerical
experiments suggest that the constant $\Clift$ might be spurious, and that
taking $\Clift = 1$ provides guaranteed upper bounds, even in non-convex domains.
We also elaborate on why the constant $\Clift$ may not be necessary in Remark
\ref{remark_clift}.


The remaining of this work is organized as follows. In Section \ref{section_setting},
we describe the setting, and recall preliminary results. Section \ref{section_localization}
presents the key abstract arguments to localize the construction of the estimators.
We introduce our two estimators in Sections \ref{section_edge_estimator}
and \ref{section_equilibrated_estimator} where we establish that they are reliable and
efficient with $p$-robust constants. Section \ref{section_numerical_examples} illustrates
the key theoretical findings with a set of numerical examples, and we present concluding
remarks in Section \ref{section_conclusion}.

\section{Setting}
\label{section_setting}

\subsection{Domain}

We consider a Lipschitz polyhedral domain $\Omega \subset \mathbb R^3$.
The boundary of $\Omega$ is split into two disjoint (relatively open) parts
$\GD$ and $\GN$ in such a way that $\partial \Omega = \overline{\GD} \cup \overline{\GN}$.
We assume that both $\GD$ and $\GN$ are polygonal. Notice that we do not
assume that $\Omega$ is simply connected, nor that $\GD$ or $\GN$ are connected.

\subsection{Functional spaces}

If $\omega \subset \Omega$ is an open set, $L^2(\omega)$ and $\BL^2(\omega)$
are the spaces of scalar and vector-valued square integrable functions defined
on $\omega$. The usual inner products and norms of these spaces are denoted
by $(\cdot,\cdot)_\omega$ and $\|\cdot\|_\omega$. $H^1(\omega)$, $\BH(\ddiv,\omega)$
and $\BH(\ccurl,\omega)$ are the usual Sobolev spaces respectively containing
square-integrable functions with square-integrable gradient, divergence, and curl.
We also employ the notation $\BH^1(\omega) \eq \left (H^1(\omega)\right )^3$.
If $\Gamma \subset \partial \omega$ is a relatively open subset, $H^1_\Gamma(\omega)$
is the set of functions of $H^1(\omega)$ with vanishing trace on $\Gamma$,
and we also note $\BH^1_\Gamma(\omega) \eq \left (H^1_\Gamma(\omega)\right )^3$.
Similarly, we set
\begin{equation*}
\BH_\Gamma(\ddiv,\omega)
\eq
\left \{
\bv \in \BH(\ddiv,\omega); \; (\bv,\grad q) + (\div \bv,q) = 0 \quad
\forall q \in H^1_{\Gamma^{\rm c}}(\omega)
\right \}
\end{equation*}
and
\begin{equation*}
\BH_\Gamma(\ccurl,\omega)
\eq
\left \{
\bw \in \BH(\ddiv,\omega); \; (\bw,\curl \bp) - (\curl \bv,\bp) = 0 \quad
\forall \bp \in \BH^1_{\Gamma^{\rm c}}(\omega)
\right \}
\end{equation*}
with $\Gamma^{\rm c} \eq \partial \omega \setminus \overline{\Gamma}$.
We refer the reader to \cite{adams_fournier_2003a,fernandes_gilardi_1997a,girault_raviart_1986a}
for an in-depth presentation of the above spaces.

For $m \geq 2$, $H^m(\omega)$ is the space of functions $v \in L^2(\omega)$ such that
$\partial^{\boldsymbol \alpha} v \in L^2(\omega)$ for all $|\boldsymbol \alpha| \leq m$.
If $\LU$ is a collection of open sets $\omega$, then $H^m(\LU)$ contains those functions
$v \in L^2 (\bigcup \LU)$ such that $v|_\omega \in H^m(\omega)$ for all $\omega \in \LU$.
We also employ the notations $\BH^m(\omega) \eq \left (H^m(\omega)\right )^3$
and $\BH^m(\LU) \eq \left (H^m(\LU)\right )^3$. The usual seminorm of $H^m(\LU)$ is
denoted by
\begin{equation*}
|v|_{H^m(\LU)}^2
\eq
\sum_{\omega \in \LU}
\sum_{|\boldsymbol \alpha| = m} \|\partial^{\boldsymbol \alpha} v\|_\omega^2
\quad \forall v \in H^m(\omega),
\end{equation*}
and we set
\begin{equation*}
|\bv|_{\BH^m(\LU)}^2 \eq \sum_{k=1}^3 |\bv_k|_{H^m(\LU)}^2
\end{equation*}
when $\bv \in \BH^m(\LU)$.

We also employ the notation
\begin{equation*}
\BLba_{\GD}(\Omega) \eq \left \{
\bv \in \BH_{\GD}(\Omega); \; \curl \bv = \bzero
\right \},
\end{equation*}
for functions with vanishing curl. When $\Omega$ is simply connected and $\GD$ is
connected, we simply have $\BLba_{\GD}(\Omega) = \grad \left (H^1_{\GD}(\Omega)\right )$.
In the general case however,
$\BLba_{\GD}(\Omega) = \grad \left (H^1_{\GD}(\Omega)\right ) + \BCH_{\GD}(\Omega)$
where $\BCH_{\GD}(\Omega)$ is a finite dimensional ``cohomology'' space. The dimension
of $\BCH_{\GD}(\Omega)$ depends on the topology of $\Omega$ and $\GD$, and its structure
is explicitly known \cite{fernandes_gilardi_1997a}. We write $\BLba^\perp_{\GD}(\Omega)$
for the orthogonal complement of $\BLba_{\GD}(\Omega)$ in $\BL^2(\Omega)$.

\subsection{Model problem}

Assuming that $\BJ \in \BL^2(\Omega)$, the weak form of \eqref{eq_curlcurl_strong}
consists in finding $(\BA,\bl) \in \BH_{\GD}(\ccurl,\Omega) \times \BLba_{\GD}(\Omega)$
such that
\begin{equation}
\label{eq_curlcurl_weak}
\left \{
\begin{array}{rcl}
(\curl \BA,\curl \bv)_\Omega + (\bl,\bv)_\Omega &=& (\BJ,\bv)_\Omega
\\
(\BA,\bw)_\Omega &=& 0
\end{array}
\right .
\end{equation}
for all $(\bv,\bw) \in \BH_{\GD}(\ccurl,\Omega) \times \BLba_{\GD}(\Omega)$.
Problem \eqref{eq_curlcurl_weak} is a standard saddle-point problem, and
admits a unique solution \cite{brezzi_1974a}. We will assume throughout this work that
$\BJ \in \BLba_{\GD}^\perp(\Omega)$, so that actually, $\bl = \bzero$, and
\begin{equation}
(\curl \BA,\curl \bv)_\Omega = (\BJ,\bv)_\Omega
\quad
\forall \bv \in \BH_{\GD}(\ccurl,\Omega).
\end{equation}

\subsection{Computational mesh and edge patches}

The domain $\Omega$ is partitioned into a mesh $\CT_h$ of (open) tetrahedral elements $K$.
We assume that the mesh is conforming in the sense of \cite{ciarlet_2002a}, meaning that the
intersection $\overline{K_-} \cap \overline{K_+}$ of two distincts elements $K_\pm \in \CT_h$
is either empty, or it is a full face, edge or vertex of both $K_-$ and $K_+$.
The set vertices, edges and faces of the mesh are respectively denoted by $\CV_h$, $\CE_h$
and $\CF_h$. Classically, we assume that the mesh is conforming with the partition
of the boundary, i.e., that for all faces $F \in \CF_h$ such that $F \subset \partial \Omega$,
either $F \subset \GD$ or $F \subset \GN$. For each $K \in \CT_h$, the quantity
$\kappa_K \eq h_K/\rho_K$ (where, as usual, $h_K$ is the diameter of $K$ and $\rho_K$ is the
diameter of the largest ball contained in $\overline{K}$) is the shape-regularity
parameter of the element $K$.

Consider an edge $\edge \in \CE_h$. The mesh patch $\CTe$ gathers those elements
$K \in \CT_h$ having $\edge$ as an edge. Besides, we denote by $\ome$ the open domain
covering the elements $K \in \CTe$, and $\he$ is the diameter of $\ome$.
We also associate with $\edge$ an arbitrary, but fixed, unit tangent vector
$\taue \eq (\bb-\ba)/|\bb-\ba|$, where $\ba,\bb \in \CV_h$ are the two vertices
of $\edge$. We then introduce the edge function
\begin{equation}
\psie \eq |\bb-\ba| (\psia \grad \psib - \psib \grad \psia)
\end{equation}
where $\psia,\psib \in \CP_1(\CT_h) \cap H^1(\Omega)$ are the usual hat
functions satisfying $\psia(\bc) = \delta_{\ba,\bc}$ and
$\psib(\bc) = \delta_{\bb,\bc}$ for all $\bc \in \CV_h$. Notice that
$\psie|_{\edge'} \cdot \boldsymbol \tau_{\edge'} = \delta_{\edge,\edge'}$
for all $\edge' \in \CE_h$. We also denote by $\kappa_\edge \eq \min_{K \in \CTe} \kappa_K$
the shape-regularity paremeter of the edge patch.

\subsection{Finite element spaces}

For all integer $q \geq 0$ and elements $K \in \CT_h$, $\CP_q(K)$
stands for the space of polynomials from $K$ to $\mathbb R$ of degree less than or equal to
$q$, and $\BCP_q(K) \eq \left (\CP_q(K)\right )^3$. As usual
\cite{nedelec_1980a,raviart_thomas_1977a}, the Raviart-Thomas and N\'ed\'elec polynomial
spaces are respectively defined by
\begin{equation*}
\RT_q(K) \eq \bx \CP_q(K) + \BCP_q(K),
\quad
\ND_q(K) \eq \bx \times \BCP_q(K) + \BCP_q(K).
\end{equation*}

If $\CT \subset \CT_h$ and $\omega$ is the domain covering the
elements of $\CT$, then $\CP_q(\CT)$ gathers the functions
$u: L^2(\omega) \to \mathbb R$ such that $u|_K \in \CP_q(K)$
for all $K \in \CT_h$. Notice that there are no ``build in''
compatibility conditions in this space. We also define
$\BCP_q(\CT)$, $\RT_q(\CT)$ and $\ND_q(\CT)$ in a similar fashion.

If $\bv \in \BL^2(\omega)$, we denote by $\pi_{\CT,q}(\bv) \in \BCP_q(\CT)$
its $\BL^2(\omega)$-projection over $\BCP_q(\CT)$, which is uniquely defined by
\begin{equation*}
(\bv-\pi_{\CT,q}(\bv),\bw_h)_\omega = 0 \qquad \forall \bw_h \in \BCP_q(\CT),
\end{equation*}
and we have the estimate
\begin{equation}
\label{eq_poincare_Kq}
\|\bv-\pi_{\CT,q}(\bv)\|_\omega
\leq
\left (
\frac{h_\CT}{\pi}
\right )^{q+1}
|\bv|_{\BH^{q+1}(\CT)}
\end{equation}
with $h_\CT \eq \max_{K \in \CT} h_K$, whenever $\bv \in \BH^{q+1}(\CT)$.
We also employ the symbol $\pi_{\CT,q}$ for the $L^2(\omega)$-projection
over $\CP_q(\CT)$. Notice that since $K \in \CT_h$ is convex, \eqref{eq_poincare_Kq}
can be obtained by repeated applications of the Poincar\'e inequality established
in \cite{payne_weinberger_1960a} to $\bv_k$, $1 \leq k \leq 3$, and its derivatives
$\partial^{\boldsymbol \alpha} \bv_k$ for $|\boldsymbol \alpha| \leq q$.

\subsection{Discrete solution}
\label{section_discrete_solution}

Throughout this work, we consider a discrete function
$\BA_h \in \BH_{\GD}(\ccurl,\Omega)$ such that
\begin{equation}
\label{eq_polynomial_degree}
\BA_h|_{\ome} \in \ND_{\pe}(\CTe)
\end{equation}
for some $\pe \geq 0$ for $\edge \in \CE_h$ and
\begin{equation}
\label{eq_galerkin_orthogonality}
(\curl \BA_h,\curl \psie)_\Omega = (\BJ,\psie)_\Omega
\end{equation}
for all edge $\edge \in \CE_h$ such that $\edge \not \subset \overline{\GD}$.
Observe that in particular, \eqref{eq_galerkin_orthogonality} typically holds true if $\BA_h$
is obtained from a Galerkin approximation to \eqref{eq_curlcurl_weak}, since the edge
functions involved in \eqref{eq_galerkin_orthogonality} belong to the lowest-order N\'ed\'elec
space. On the other hand, \eqref{eq_polynomial_degree} is satisfied if $\BA_h$ belongs
to a $hp$-adaptive N\'ed\'elec finite element space, and we can simply take $\pe = p$ for all
$\edge \in \CE_h$ in the particular case of a uniform polynomial degree $p \geq 0$.


\subsection{Regular decomposition}
\label{section_regular_decomposition}

For all $\bv \in \BH_{\GD}(\ccurl,\Omega)$, there exists a function $\bw \in \BH^1_{\GD}(\Omega)$
such that $\curl \bw = \curl \bv$ and
\begin{equation}
\label{eq_regular_decomposition}
\|\grad \bw\|_\Omega \leq \Clift \|\curl \bv\|_\Omega.
\end{equation}
In addition, if $\Omega$ is convex and either $\GD$ or $\GN$ is empty, we can
take $\Clift = 1$. We refer the reader to
\cite{costabel_dauge_nicaise_1999a,girault_raviart_1986a,hiptmair_pechstein_2017a}
for a proof of these results.

\subsection{Partition of unity via edge functions}
\label{section_partition_of_unity}

For all $\edge \in \CE_h$, $\supp \psie = \overline{\ome}$, and we have
\begin{equation}
\label{eq_scaling_edge_functions}
\|\psie\|_{L^\infty(\ome)} + \he \|\curl \psie\|_{L^\infty(\ome)} \leq C(\kappa_\edge)
\quad
\forall \edge \in \CE_h,
\end{equation}
where $C(\kappa_\edge)$ is a constant that only depends on the shape regularity parameter
$\kappa_\edge$ of the edge patch. In addition, the identity
\begin{equation}
\label{eq_partition_of_unity}
\bw = \sum_{\edge \in \CE_h} (\bw \cdot \taue) \psie
\end{equation}
holds true for all $\bw \in \BL^2(\Omega)$. We refer the reader
to \cite[Section 5.3]{chaumontfrelet_ern_vohralik_2021a} for a proof of these facts.

\subsection{Local functional spaces and Poincar\'e inequalities}
\label{section_local_spaces}

Consider an edge $\edge \in \CE_h$. If $\edge \not \subset \overline{\GD}$,
$H^1_\star(\ome)$ is the subset of $H^1(\ome)$ of functions with vanishing
mean value and $\BH_0(\ddiv,\ome) \eq \BH_{\partial \ome}(\ddiv,\ome)$.
If on the other hand, $\edge \subset \overline{\GD}$,
$H^1_\star(\ome) \eq H^1_{\Gamma}(\ome)$ and
$\BH_0(\ddiv,\ome) \eq \BH_{\Gamma^{\rm c}}(\ddiv,\ome)$
where $\overline{\Gamma} \eq \partial \ome \cap \GD$ and
$\Gamma^{\rm c} \eq \partial \omega \setminus \GD$.

For all edges $\edge \in \CE_h$, the constant
\begin{equation}
\label{eq_poincare_ome}
\CPe \eq \he^{-1} \sup_{\substack{w \in H^1_\star(\ome) \\ \|\grad w\|_{\ome} = 1}} \|w\|_{\ome}
\end{equation}
is finite and only depends on $\kappa_\edge$.

Since every cell $K \in \CT_h$ is convex \cite{payne_weinberger_1960a}, the following elementwise
Poincar\'e inequality holds true:
\begin{equation}
\label{eq_poincare_K}
\|\bv-\pi_{K,0} \bv\|_K \leq \frac{h_K}{\pi} \|\grad \bv\|_K \qquad \forall \bv \in \BH^1(K),
\end{equation}
where $\pi_{K,0}$ denotes the $\BL^2(K)$-projection of $\bv$ over $\BCP_0(K)$.

\section{Localization of the residual functional}
\label{section_localization}

In this section, we start by presenting abstract arguments
later used to localize the computation of the estimator.
Specifically, we introduce a residual functional
$\BCR \in \left (\BH_{\GD}(\ccurl,\Omega)\right )'$ by setting
\begin{equation}
\langle \BCR, \bv \rangle \eq (\BJ,\bv)_\Omega - (\curl \BA_h,\curl \bv)_\Omega
\end{equation}
for all $\bv \in \BH_{\GD}(\ccurl,\Omega)$. Observe that because of
\eqref{eq_galerkin_orthogonality}, we have
\begin{equation}
\label{eq_residual_orthogonality}
\langle \BCR,\psie \rangle = 0 \qquad \forall \edge \in \CE_h; \; \edge \not \subset \overline{\GD}.
\end{equation}
We will employ two key norms for the residual functional. On the one hand the usual dual norm
reads
\begin{equation}
\label{eq_definition_BCR_star_Omega}
\|\BCR\|_{\star,\Omega}
\eq
\sup_{\substack{\bv \in \BH_{\GD}(\ccurl,\Omega) \\ \|\curl \bv\| = 1}}
\langle \BCR,\bv \rangle.
\end{equation}
On the other hand, we also introduce a localized norm
\begin{equation}
\label{eq_definition_BCR_star_edge}
\|\BCR\|_{\star,\edge}
\eq
\sup_{\substack{w \in H^1_\star(\ome) \\ \|\grad w\|_{\ome} = 1}}
\langle \BCR, w \psie \rangle
\end{equation}
for each $\edge \in \CE_h$.

Notice that as $\langle \BCR,\bv \rangle = (\curl(\BA-\BA_h),\curl \bv)_\Omega$
for all $\bv \in \BH_{\GD}(\Omega)$, we have
\begin{equation}
\label{eq_dual_norm_error}
\|\BCR\|_{\star,\Omega} = \|\curl(\BA-\BA_h)\|_\Omega,
\end{equation}
so that the dual norm of $\BCR$ is the quantity we actually want to estimate.
Directly estimating $\|\BCR\|_{\star,\Omega}$, however, would lead to global (and
expensive) computations. Henceforth, the goal of this section is to establish
equivalence results between the dual norm $\|\BCR\|_{\star,\Omega}$ and the
squared sum of the localized norms $\|\BCR\|_{\star,\edge}$, which are more suitable
to tackle numerically.

We start by establishing an upper bound.

\begin{theorem}[Continous reliability]
\label{theorem_continuous_reliability}
We have
\begin{equation}
\label{eq_continuous_reliability}
\|\BCR\|_{\star,\Omega}
\leq
\sqrt{6} \Clift \left (
\sum_{\ell \in \CE_h} \|\BCR\|_{\star,\edge}^2
\right )^{1/2}.
\end{equation}
\end{theorem}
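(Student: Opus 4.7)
The strategy is to take an arbitrary $\bv \in \BH_{\GD}(\ccurl,\Omega)$ with $\|\curl \bv\|_\Omega = 1$, replace it by a smoother representative $\bw \in \BH^1_{\GD}(\Omega)$ with the same curl via the regular decomposition in Section~\ref{section_regular_decomposition}, decompose $\bw$ via the edge partition of unity \eqref{eq_partition_of_unity}, and use the localized norms \eqref{eq_definition_BCR_star_edge} edge-by-edge after absorbing an additive constant when possible.

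\textbf{Step 1: Reduction to an $\BH^1$ test function.} Since $\BJ \in \BLba_{\GD}^\perp(\Omega)$ the multiplier $\bl$ in \eqref{eq_curlcurl_weak} vanishes, so that $\langle \BCR,\bu\rangle = (\curl(\BA-\BA_h),\curl \bu)_\Omega$ for every $\bu \in \BH_{\GD}(\ccurl,\Omega)$. Applying \eqref{eq_regular_decomposition}, we pick $\bw \in \BH^1_{\GD}(\Omega)$ with $\curl \bw = \curl \bv$ and $\|\grad \bw\|_\Omega \leq \Clift \|\curl \bv\|_\Omega = \Clift$. Since $\curl(\bv-\bw) = \bzero$, this yields $\langle \BCR,\bv\rangle = \langle \BCR,\bw\rangle$.

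\textbf{Step 2: Localization via the partition of unity.} Using \eqref{eq_partition_of_unity}, write $\bw = \sum_{\edge \in \CE_h} w_\edge \psie$ with $w_\edge \eq \bw \cdot \taue \in H^1(\ome)$, so that $\langle \BCR,\bw\rangle = \sum_{\edge \in \CE_h} \langle \BCR, w_\edge \psie \rangle$. For each edge I want to match $w_\edge$ against a function in $H^1_\star(\ome)$. If $\edge \not\subset \overline{\GD}$, the orthogonality \eqref{eq_residual_orthogonality} lets me subtract the mean: $\langle \BCR, w_\edge \psie\rangle = \langle \BCR, (w_\edge - c_\edge) \psie\rangle$ with $c_\edge \eq |\ome|^{-1}\int_{\ome} w_\edge$, and then $w_\edge - c_\edge \in H^1_\star(\ome)$ with $\|\grad(w_\edge-c_\edge)\|_\ome = \|\grad w_\edge\|_\ome$. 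If $\edge \subset \overline{\GD}$, the trace of $\bw$ vanishes on $\GD$, hence $w_\edge$ already lies in $H^1_\Gamma(\ome) = H^1_\star(\ome)$. In either case
\[
|\langle \BCR, w_\edge \psie\rangle| \leq \|\BCR\|_{\star,\edge}\, \|\grad w_\edge\|_\ome .
\]

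\textbf{Step 3: Finite-overlap bound.} Since $\taue$ is constant, $\grad w_\edge = (\grad \bw)^{\!\top}\taue$, and $|\taue| = 1$ gives $|\grad w_\edge|^2 \leq |\grad \bw|^2$ pointwise (Cauchy–Schwarz on the matrix entries). Every tetrahedron has exactly $6$ edges, so each $K \in \CT_h$ belongs to exactly $6$ patches $\CTe$, yielding the finite-overlap estimate
\[
\sum_{\edge \in \CE_h} \|\grad w_\edge\|_\ome^2 \leq \sum_{\edge \in \CE_h} \|\grad \bw\|_\ome^2 = 6 \|\grad \bw\|_\Omega^2 \leq 6 \Clift^2 \|\curl \bv\|_\Omega^2.
\]
Combining this with Cauchy–Schwarz applied to the sum over edges in Step~2 yields
\[
|\langle \BCR,\bv\rangle| \leq \left(\sum_{\edge \in \CE_h} \|\BCR\|_{\star,\edge}^2\right)^{1/2} \!\!\cdot \sqrt{6}\,\Clift\, \|\curl \bv\|_\Omega,
\]
and taking the supremum over $\bv$ with $\|\curl \bv\|_\Omega = 1$ in \eqref{eq_definition_BCR_star_Omega} proves the theorem. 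The only delicate point is handling the two boundary cases for $\edge$ uniformly, which is the reason we carefully split $H^1_\star(\ome)$ according to $\edge \subset \overline{\GD}$ or not in Section~\ref{section_local_spaces} and built the orthogonality \eqref{eq_residual_orthogonality} into $\BA_h$; the rest is geometric bookkeeping.
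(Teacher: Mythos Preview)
Your proof is correct and follows essentially the same route as the paper's: regular decomposition to pass from $\bv$ to $\bw\in\BH^1_{\GD}(\Omega)$, edge partition of unity, subtraction of the mean (using \eqref{eq_residual_orthogonality}) or the Dirichlet trace to land in $H^1_\star(\ome)$, the pointwise bound $\|\grad(\bw\cdot\taue)\|_{\ome}\le\|\grad\bw\|_{\ome}$, and Cauchy--Schwarz together with the six-fold overlap of edge patches. The only differences are cosmetic: you spell out the finite-overlap count and the justification of $\langle\BCR,\bv\rangle=\langle\BCR,\bw\rangle$ a bit more explicitly than the paper does.
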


\begin{proof}
Let $\bv \in \BH_{\GD}(\ccurl,\Omega)$ with $\|\curl \bv\|_\Omega = 1$.
Recalling the discussion in Section \ref{section_regular_decomposition},
there exists $\bw \in \BH^1_{\GD}(\Omega)$ with $\curl \bv = \curl \bw$ and
\begin{equation}
\label{tmp_grad_bw}
\|\grad \bw\|_{\Omega} \leq \Clift \|\curl \bv\|_{\Omega} = \Clift.
\end{equation}
Then, we use \eqref{eq_partition_of_unity} and \eqref{eq_residual_orthogonality}
to show that
\begin{equation*}
\langle \BCR,\bv \rangle
=
\langle \BCR,\bw \rangle
=
\sum_{\edge \in \CE_h} \langle \BCR,(\bw\cdot\taue)\psie \rangle
=
\sum_{\edge \in \CE_h} \langle \BCR,(\bw\cdot\taue-\vartheta_\edge)\psie \rangle
\end{equation*}
with
\begin{equation*}
\vartheta_\edge \eq \left \{
\begin{array}{ll}
0 & \text{ if } \edge \subset \overline{\GD},
\\
\frac{1}{|\ome|}\int_{\ome} \bw \cdot \taue & \text{otherwise.}
\end{array}
\right .
\end{equation*}
We then observe that $\bw \cdot \taue - \vartheta_\edge \in H^1_\star(\ome)$,
so that recalling the definition of $\|\BCR\|_{\star,\edge}$ in
\eqref{eq_definition_BCR_star_edge}, we have
\begin{align*}
\langle \BCR,\bv \rangle
=
\sum_{\edge \in \CE_h} \|\BCR\|_{\star,\edge}\|\grad(\bw \cdot \taue)\|_{\ome}
&\leq
\sum_{\edge \in \CE_h} \|\BCR\|_{\star,\edge}\|\grad \bw\|_{\ome}
\\
&\leq
\sqrt{6}
\left (
\sum_{\edge \in \CE_h}
\|\BCR\|_{\star,\edge}^2
\right )^{1/2}
\|\grad \bw\|_\Omega,
\end{align*}
and \eqref{eq_continuous_reliability} follows from \eqref{tmp_grad_bw}.
\end{proof}

Next, we provide a lower bound.

\begin{theorem}[Continuous efficiency]
For all edges $\edge \in \CE_h$, we have
\begin{equation}
\label{eq_continuous_efficiency}
\|\BCR\|_{\star,\edge}
\leq
\Cconte \|\curl(\BA-\BA_h)\|_{\ome}
\end{equation}
where
\begin{equation*}
\Cconte
\eq
\|\psie\|_{L^\infty(\ome)} + \CPe \he\|\curl \psie\|_{L^\infty(\ome)}
\end{equation*}
only depends on $\kappa_\edge$.
\end{theorem}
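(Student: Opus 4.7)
The plan is to estimate $\langle \BCR, w\psie \rangle$ for an arbitrary $w \in H^1_\star(\ome)$ with $\|\grad w\|_{\ome} = 1$ in the supremum defining $\|\BCR\|_{\star,\edge}$, and to show directly, via the product rule, Cauchy--Schwarz, and a local Poincar\'e inequality, that this quantity is controlled by $\Cconte \|\curl(\BA-\BA_h)\|_{\ome}$.

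First I would verify that the extension by zero of $w\psie$ from $\ome$ to $\Omega$ is an admissible test function for $\BCR$, i.e.\ that it lies in $\BH_{\GD}(\ccurl,\Omega)$. Since $\supp \psie \subset \overline{\ome}$, the extension is supported in $\overline{\ome}$. A short inspection of $\psie = |\bb-\ba|(\psia\grad\psib - \psib\grad\psia)$ shows that the tangential trace of $\psie$ vanishes on every boundary face of $\ome$: such a face cannot contain $\edge$, so at least one of the hat functions vanishes identically there, and when it is $\psib$ that vanishes, the remaining gradient $\grad\psib$ is normal to the face (the face being opposite the vertex $\bb$ in its unique adjacent tetrahedron), symmetrically for $\psia$. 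When $\edge \subset \overline{\GD}$, the factor $w \in H^1_{\Gamma}(\ome)$ additionally vanishes on $\partial \ome \cap \GD$, completing the check.

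Once admissibility is established, the assumption $\BJ \in \BLba_{\GD}^\perp(\Omega)$ together with \eqref{eq_curlcurl_weak} yields the identity $\langle \BCR, w\psie \rangle = (\curl(\BA-\BA_h),\curl(w\psie))_{\ome}$, the localization to $\ome$ following from $\supp \psie \subset \overline{\ome}$. Applying the product rule $\curl(w\psie) = \grad w \times \psie + w\,\curl\psie$ and Cauchy--Schwarz gives
\begin{equation*}
|\langle \BCR, w\psie \rangle|
\leq
\|\curl(\BA-\BA_h)\|_{\ome}
\bigl(\|\psie\|_{L^\infty(\ome)}\|\grad w\|_{\ome} + \|\curl\psie\|_{L^\infty(\ome)}\|w\|_{\ome}\bigr).
\end{equation*}
The local Poincar\'e inequality \eqref{eq_poincare_ome}, valid for $w \in H^1_\star(\ome)$ in both boundary configurations, provides $\|w\|_{\ome} \leq \CPe \he \|\grad w\|_{\ome}$; combined with $\|\grad w\|_{\ome}=1$ and the supremum over $w$, this produces \eqref{eq_continuous_efficiency} with the announced constant $\Cconte$. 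The dependence of $\Cconte$ on $\kappa_\edge$ alone then follows from \eqref{eq_scaling_edge_functions} and the analogous dependence of $\CPe$ stated after \eqref{eq_poincare_ome}. No step is really an obstacle; the only mild delicacy is the tangential-trace computation for $\psie$, which is a classical fact about N\'ed\'elec shape functions and is implicitly used already in the partition of unity \eqref{eq_partition_of_unity}.
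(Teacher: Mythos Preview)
Your proof is correct and follows the same route as the paper: both rewrite $\langle \BCR, w\psie\rangle$ as $(\curl(\BA-\BA_h),\curl(w\psie))_{\ome}$, expand $\curl(w\psie)$ via the product rule, apply Cauchy--Schwarz together with the local Poincar\'e inequality \eqref{eq_poincare_ome}, and conclude using \eqref{eq_scaling_edge_functions}. The only addition is your admissibility discussion for $w\psie$, which the paper leaves implicit; your assertion that a boundary face of $\ome$ ``cannot contain $\edge$'' is slightly imprecise for faces lying on $\partial\Omega$, but the conclusion is unaffected since no trace condition is imposed on $\GN$ and, on $\GD$, either $\edge\not\subset\overline{\GD}$ forces the face not to contain $\edge$ or $w$ vanishes there.
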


\begin{proof}
First, we observe that there exists $w_\star \in H^1_\star(\ome)$
with $\|\grad w_\star\|_{\ome} = 1$ such that
\begin{align*}
\|\BCR\|_{\star,\edge}
=
\langle \BCR,w_\star\psie\rangle
&=
(\curl(\BA-\BA_h),\curl(w_\star\psie))_{\Omega}
\\
&\leq
\|\curl(\BA-\BA_h)\|_{\ome}\|\curl(w_\star\psie)\|_{\ome}.
\end{align*}
Then, \eqref{eq_continuous_efficiency} follows since
\begin{align*}
\|\curl(w_\star\psie)\|_{\ome}
&\leq
\|\curl \psie\|_{L^\infty(\ome)} \|w_\star\|_{\ome}
+
\|\psie\|_{L^\infty(\ome)} \|\grad w_\star\|_{\ome} \\
&\leq
\|\psie\|_{L^\infty(\ome)} + \CPe \he\|\curl \psie\|_{L^\infty(\ome)}.
\end{align*}
The fact that $\Cconte$ only depends on $\kappa_\edge$ is a direct
consequence of \eqref{eq_scaling_edge_functions} and \eqref{eq_poincare_ome}.
\end{proof}

Finally, we provide an alternative expression for $\|\BCR\|_{\edge,\star}$
by employing a duality argument. This dual expression corresponds to a minimization
problem and is the basis of the estimators we propose in this work.

\begin{theorem}[Dual characterization]
\label{theorem_dual_characterization}
For all $\edge \in \CE_h$, the equality
\begin{equation}
\label{eq_dual_characterization}
\|\BCR\|_{\edge,\star}
=
\|\sige + \psie \times \curl \BA_h\|_{\ome}
\end{equation}
holds true with
\begin{equation}
\label{eq_definition_sige}
\sige
\eq
\arg \min_{\substack{
\bv \in \BH_0(\ddiv,\ome) \\ \div \bv = \psie \cdot \BJ - \curl \psie \cdot \curl \BA_h
}}
\|\bv + \psie \times \curl \BA_h\|_{\ome}.
\end{equation}
\end{theorem}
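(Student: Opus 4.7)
The plan is to recast $\langle \BCR, w\psie\rangle$ via the product rule for $\curl$ and then establish the equality by matching upper and lower bounds: the upper bound from integration by parts plus Cauchy--Schwarz, and the lower bound by constructing an admissible competitor in \eqref{eq_definition_sige} that saturates the dual norm. As a preparatory step, I would expand $\curl(w\psie) = w\curl\psie + \grad w \times \psie$ and apply the cyclic identity $\curl\BA_h \cdot (\grad w \times \psie) = \grad w \cdot (\psie \times \curl\BA_h)$ to obtain, for every $w \in H^1_\star(\ome)$,
\begin{equation*}
\langle \BCR, w\psie \rangle = (f_\edge, w)_\ome - (\bg_\edge, \grad w)_\ome,
\end{equation*}
where I abbreviate $f_\edge \eq \psie \cdot \BJ - \curl\psie \cdot \curl\BA_h$ and $\bg_\edge \eq \psie \times \curl\BA_h$.

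For the upper bound, I would fix any admissible $\bv \in \BH_0(\ddiv,\ome)$ with $\div\bv = f_\edge$. The definition of $\BH_0(\ddiv,\ome)$ in Section \ref{section_local_spaces} is engineered so that in both boundary configurations the integration-by-parts identity $(f_\edge, w)_\ome + (\bv, \grad w)_\ome = 0$ holds for every $w \in H^1_\star(\ome)$. Substituting gives $\langle \BCR, w\psie\rangle = -(\bv + \bg_\edge, \grad w)_\ome$, whence Cauchy--Schwarz together with $\|\grad w\|_\ome = 1$ yields $\|\BCR\|_{\star,\edge} \le \|\bv + \bg_\edge\|_\ome$; taking the infimum over admissible $\bv$ produces $\|\BCR\|_{\star,\edge} \le \|\sige + \bg_\edge\|_\ome$.

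For the reverse inequality, I would introduce the auxiliary Poisson problem: find $w_\star \in H^1_\star(\ome)$ such that
\begin{equation*}
(\grad w_\star, \grad \phi)_\ome = \langle \BCR, \phi\psie\rangle \qquad \forall \phi \in H^1_\star(\ome).
\end{equation*}
This is well posed by Lax--Milgram together with the Poincar\'e inequality \eqref{eq_poincare_ome}; in the case $\edge \not\subset \overline{\GD}$, the compatibility of the right-hand side with constants is exactly the Galerkin orthogonality \eqref{eq_galerkin_orthogonality} applied to $\psie$. Testing with $\phi = w_\star/\|\grad w_\star\|_\ome$ then gives $\|\BCR\|_{\star,\edge} \ge \|\grad w_\star\|_\ome$. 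Setting $\widetilde{\bv} \eq -\grad w_\star - \bg_\edge$ and rewriting the Poisson equation in the weak-divergence form of Section \ref{section_local_spaces} shows that $\widetilde{\bv} \in \BH_0(\ddiv,\ome)$ with $\div\widetilde{\bv} = f_\edge$, so $\widetilde{\bv}$ is admissible in \eqref{eq_definition_sige} and the minimality of $\sige$ forces
\begin{equation*}
\|\sige + \bg_\edge\|_\ome \le \|\widetilde{\bv} + \bg_\edge\|_\ome = \|\grad w_\star\|_\ome \le \|\BCR\|_{\star,\edge},
\end{equation*}
which combined with the upper bound closes the identity. The main subtle point will be the uniform handling of the cases $\edge \subset \overline{\GD}$ and $\edge \not\subset \overline{\GD}$ in the IBP pairing between $\BH_0(\ddiv,\ome)$ and $H^1_\star(\ome)$ and in the verification that $\widetilde{\bv}$ has the prescribed weak divergence; in each case it reduces to directly invoking the weak characterization built into the definition of $\BH_\Gamma(\ddiv,\ome)$ in Section \ref{section_setting}.
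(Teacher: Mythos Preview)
Your proposal is correct and follows essentially the same route as the paper: both hinge on the Riesz representant $w_\star\in H^1_\star(\ome)$ of $w\mapsto\langle\BCR,w\psie\rangle$ and the identification of \eqref{eq_definition_sige} as the dual (mixed) formulation of that Poisson problem. The paper simply compresses your two inequalities into one line by asserting the primal--mixed equivalence $-\sige=\grad w_\star+\psie\times\curl\BA_h$ directly, whereas you recover the same identity by sandwiching with an upper bound (any admissible $\bv$) and a lower bound (the explicit competitor $\widetilde{\bv}=-\grad w_\star-\bg_\edge$, which in fact coincides with $\sige$); the extra care you take in handling the two boundary configurations via the weak characterization of $\BH_0(\ddiv,\ome)$ is exactly what the paper sweeps under the phrase ``mixed formulation''.
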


\begin{proof}
Fix $\edge \in \CE_h$. We introduce a Riesz representant $r_\edge$ defined as the
unique element of $H^1_\star(\ome)$ such that
\begin{equation}
\label{tmp_definition_r_edge}
(\grad r_\edge,\grad w)_{\ome}
=
\langle \BCR, w\psie \rangle
\end{equation}
for all $w \in H^1_\star(\ome)$. Since
\begin{align*}
\langle \BCR, w\psie \rangle
&=
(\BJ,w\psie)_\Omega - (\curl \BA_h,\curl(w\psie))_\Omega
\\
&=
(\BJ,w\psie)_{\ome} - (\curl \BA_h,\grad w \times \psie + w\curl \psie)_{\ome}
\\
&=
(\BJ \cdot \psie - \curl \BA_h \cdot \curl \psie,w)_{\ome}
-
(\psie \times \curl \BA_h,\grad w)_{\ome},
\end{align*}
one easily sees that
\begin{equation*}
(\grad r_\edge + \psie \times \curl \BA_h,\grad w)_{\ome}
=
(\psie \cdot \BJ-\curl \psie \cdot \curl \BA_h,w)_{\ome}
\end{equation*}
for all $w \in H^1_\star(\ome)$. Then, since \eqref{eq_definition_sige} is the mixed
formulation of \eqref{tmp_definition_r_edge}, we have
$-\sige = \grad r_\ell + \psie \times \grad \BA_h$,
and \eqref{eq_dual_characterization} follows.
\end{proof}

Notice that because of \eqref{eq_galerkin_orthogonality}, for $\edge \in \CE_h$, we do have
\begin{equation}
\label{eq_compatibility_condition}
(\psie \cdot \BJ-\curl \psie \cdot \curl \BA_h,1)_{\ome}
=
(\BJ,\psie)_\Omega-(\curl \BA_h,\curl \psie)_{\Omega}
=
0
\end{equation}
whenever $\psie \not \subset \overline{\GD}$, ensuring the well-posedness of
\eqref{eq_definition_sige}.

\begin{remark}[Broken patchwise equilibration]
\label{remark_broken_estimator}
The broken patchwise equilibration procedure in \cite{chaumontfrelet_ern_vohralik_2021a}
hinges on related localized norms of the residual functional, namely:
\begin{equation*}
\|\BCR\|_{\dagger,\edge}
\eq
\sup_{\substack{\bv \in \BH_0(\ccurl,\ome) \\ \|\curl \bv\| = 1}}
\langle \BCR,\bv \rangle,
\qquad \edge \in \CE_h,
\end{equation*}
where $\BH_0(\ccurl,\ome)$ is defined as $\BH_0(\ddiv,\ome)$ in Section
\ref{section_local_spaces}. Since $v \psie \in \BH_0(\ccurl,\ome)$ with
$\|\curl(v\psie)\|_{\ome} \leq \Cconte \|\grad v\|_{\ome}$ when $v \in H^1_\star(\ome)$,
we have
\begin{equation*}
\|\BCR\|_{\star,\edge} \leq \Cconte \|\BCR\|_{\dagger,\edge}.
\end{equation*}
On the other hand, we have from \cite[Lemma 5.5]{chaumontfrelet_ern_vohralik_2021a} that
\begin{equation*}
\|\BCR\|_{\dagger,\edge} \leq \|\curl (\BA-\BA_h)\|_{\ome},
\end{equation*}
where, in comparison to \eqref{eq_continuous_efficiency}, the constant $\Cconte$ is omitted.
As a result, the constant $\Cconte$ is moved from the reliability estimate to the
efficiency estimate here as compared to \cite{chaumontfrelet_ern_vohralik_2021a}.
\end{remark}

\section{An edge-based a posteriori estimator}
\label{section_edge_estimator}

This section introduces a first a posteriori error estimator that is attached to the edges
of the mesh.  It is simply defined by mimicking the definition of $\sige$ in
\eqref{eq_definition_sige} at the discrete level. As a result, for all edges $\edge \in \CE_h$,
we fix a polynomial degree $\qe \geq \pe+1$, and we set
\begin{equation}
\label{eq_definition_sigeh}
\sigeh \eq \arg \min_{\substack{
\bv_h \in \RT_{\qe}(\CTe) \cap \BH_0(\ddiv,\ome)
\\
\div \bv_h = \pi_{\qe}(\psie \cdot \BJ - \curl \psie \cdot \curl \BA_h)
}}
\|\bv_h + \psie \times \curl \BA_h\|_{\ome}.
\end{equation}
as well as
\begin{equation}
\label{eq_definition_estimator}
\eta_\edge \eq \|\sigeh + \psie \times \curl \BA_h\|_{\ome}.
\end{equation}
Notice that \eqref{eq_definition_sigeh} indeed provides a sound definition for $\sigeh$
since the compatibility condition in \eqref{eq_compatibility_condition} holds whenever
required.

Recalling the results of Section \ref{section_localization}, it is clear that $\eta_\edge$ will
make a good estimator if the discrete minimizer $\sigeh$ is sufficiently close to the
continuous minimizer $\sige$ for each $\edge \in \CE_h$. This is classical in the analysis
of equilibrated estimator
\cite{braess_pillwein_schoberl_2009a,chaumontfrelet_ern_vohralik_2021a,ern_vohralik_2018a}
and the corresponding result is often called ``stable discrete minimization''.
The result we actually need is stable discrete minimization in the $\BH(\ddiv)$
Sobolev space over an edge patch $\CTe$. For the sake of shortness, we skip the proof of this result,
as it can be easily obtained by combining the proof of stable discrete minimization in
$\BH(\ddiv)$ over a vertex patch \cite{braess_pillwein_schoberl_2009a,ern_vohralik_2018a},
and the proof of stable discrete minimization in $\BH(\ccurl)$
over an edge patch \cite{chaumontfrelet_ern_vohralik_2020a,chaumontfrelet_ern_vohralik_2021a}.

\begin{proposition}[Stable discrete minimization]
\label{proposition_stable_discrete_minimization}
Consider an edge $\edge \in \CE_h$ and a polynomial degree $q \geq 0$.
Let $r \in \CP_q(\CTe)$ and $\bg \in \RT_q(\CTe)$, and if
$\edge \not \subset \overline{\GD}$, assume that $(r,1)_{\ome} = 1$.
Then, we have
\begin{equation}
\label{eq_stable_discrete_minimization}
\min_{\substack{
\bv_h \in \RT_q(\CTe) \cap \BH_0(\ddiv,\ome)
\\
\div \bv_h = r
}}
\|\bv_h + \bg\|_{\ome}
\leq
\Cste
\min_{\substack{
\bv \in \BH_0(\ddiv,\ome)
\\
\div \bv = r
}}
\|\bv + \bg\|_{\ome}
\end{equation}
where the constant $\Cste$ only depends on $\kappa_\edge$.
\end{proposition}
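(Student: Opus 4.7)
The plan is to follow the standard strategy for stable discrete minimization in $\BH(\ddiv)$: take the continuous minimizer $\bv^\star$ realizing the right-hand side of \eqref{eq_stable_discrete_minimization}, and construct a ``good'' discrete surrogate $\tilde\bv_h \in \RT_q(\CTe) \cap \BH_0(\ddiv,\ome)$ satisfying the divergence constraint $\div \tilde\bv_h = r$ exactly, together with
\begin{equation*}
\|\tilde\bv_h - \bv^\star\|_{\ome} \leq C(\kappa_\edge)\, \|\bv^\star+\bg\|_{\ome}.
\end{equation*}
Such a $\tilde\bv_h$ is admissible in the discrete minimum on the left-hand side, so the triangle inequality yields the claimed bound. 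Equivalently, one can pass through the mixed saddle-point formulation dual to each side of \eqref{eq_stable_discrete_minimization}, which identifies both minimizers with the flux variables of a continuous and discrete mixed Poisson problem on $\ome$ with load $r$, data $\bg$, and the appropriate boundary conditions (homogeneous on all of $\partial \ome$ when $\edge \not\subset \overline{\GD}$, and homogeneous only on $\Gamma^{\rm c}$ otherwise).

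To build $\tilde\bv_h$ I would proceed element-by-element around the edge patch. Enumerate the tetrahedra $K_1,\dots,K_N$ of $\CTe$ in cyclic (or, when $\edge \subset \overline{\GD}$, linear) order around $\edge$, so that consecutive elements $K_{j-1},K_j$ share a triangular face $F_{j-1}$ containing $\edge$. On $K_j$, solve a local mixed Raviart--Thomas problem of degree $q$ with prescribed divergence $r|_{K_j}$, with normal trace on $F_{j-1}$ set to the value already produced on $K_{j-1}$, with homogeneous normal trace on the portion of $\partial K_j$ lying on $\Gamma^{\rm c}$, and with the remaining normal trace on $F_j$ obtained from a local stable polynomial lifting of $\bv^\star \cdot \bn|_{F_j}$ (so that the whole construction stays close to $\bv^\star$). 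The $p$-robustness of each local solve rests on the polynomial extension operators of Demkowicz--Gopalakrishnan--Sch\"oberl together with a stable right-inverse of the divergence on a single tetrahedron, exactly the ingredients already assembled in \cite{braess_pillwein_schoberl_2009a,ern_vohralik_2018a} for the vertex-patch case.

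The main obstacle, and the reason the vertex-patch proof does not transfer verbatim, is geometric: the tetrahedra of $\CTe$ wrap around $\edge$ in a ring (when $\edge$ is interior) rather than a star. The sequential construction must close up consistently upon returning to $K_1$, and this closure is possible precisely under the compatibility condition on $(r,1)_{\ome}$ stated in the hypotheses. The careful bookkeeping required to handle this ring topology, as well as the variant with the linear chain for $\edge \subset \overline{\GD}$, is exactly the argument already developed in the edge-patch $\BH(\ccurl)$-stability analysis of \cite{chaumontfrelet_ern_vohralik_2020a,chaumontfrelet_ern_vohralik_2021a}. Grafting that ring-based sequential scheme onto the elementwise $\BH(\ddiv)$ lifting estimates of \cite{braess_pillwein_schoberl_2009a,ern_vohralik_2018a}, and tracking that every local step depends only on the shape-regularity of the tetrahedron involved, yields \eqref{eq_stable_discrete_minimization} with a constant $\Cste$ depending only on $\kappa_\edge$.
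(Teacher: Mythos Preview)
Your proposal is correct and matches the paper's own treatment: the paper explicitly \emph{omits} the proof, stating only that it ``can be easily obtained by combining the proof of stable discrete minimization in $\BH(\ddiv)$ over a vertex patch \cite{braess_pillwein_schoberl_2009a,ern_vohralik_2018a}, and the proof of stable discrete minimization in $\BH(\ccurl)$ over an edge patch \cite{chaumontfrelet_ern_vohralik_2020a,chaumontfrelet_ern_vohralik_2021a}.'' Your outline---sequential sweep around the ring/chain of tetrahedra sharing $\edge$, elementwise $\BH(\ddiv)$ liftings with the Demkowicz--Gopalakrishnan--Sch\"oberl extensions, and closure via the compatibility condition on $r$---is precisely that combination spelled out in more detail than the paper itself provides.
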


Remark that since the minimization set in the left-hand side of
\eqref{eq_stable_discrete_minimization} is contained in the
minimization of the right-hand side, the reverse inequality
\begin{equation}
\label{eq_stable_discrete_minimization_reverse}
\min_{\substack{
\bv \in \BH_0(\ddiv,\ome)
\\
\div \bv = r
}}
\|\bv + \bg\|_{\ome}
\leq
\min_{\substack{
\bv_h \in \RT_q(\CTe) \cap \BH_0(\ddiv,\ome)
\\
\div \bv_h = r
}}
\|\bv_h + \bg\|_{\ome}
\end{equation}
trivially holds true.

Let us further comment that since both sides of
\eqref{eq_stable_discrete_minimization} vanish if and only if
$\bg \in \RT_q(\CTe) \cap \BH(\ddiv,\ome)$ with $\div \bg = r$,
by linearity, it is clear that the inequality should hold for
some constant $\Cste$. It is also clear, based on standard scaling
arguments and Piola mappings, that $\Cste$ is independent of the mesh size.
The non-trivial part is two show that $\Cste$ does not depend on $q$.

Before establishing our key reliability and efficiency results, we need
to cope with the fact that the divergence constraints in the continuous
and discrete minimiaztion problems \eqref{eq_definition_sige} and
\eqref{eq_definition_sigeh} defining $\sige$ and $\sigeh$ are different
when $\BJ$ is not a polynomial. Classically, this is done by introducing
an oscillation term. Notice that since $\qe \geq \pe+1$, the estimate in
\eqref{eq_estimate_osce} shows that $\osce$ converges to zero faster than
the error, justifying the ``oscillation'' terminology
(recall from \eqref{eq_scaling_edge_functions} that $\|\psie\|_{L^\infty(\ome)} \simeq 1$).

\begin{lemma}[Data oscillation]
\label{lemma_data_oscillation}
For all $\edge \in \CE_h$, we have
\begin{equation}
\label{eq_data_oscillation}
\left |
\|\BCR\|_{\star,\edge}
-
\min_{\substack{
\bv \in \BH_0(\ddiv,\ome)
\\
\div \bv = \pi_{\qe}(\psie \cdot \BJ-\curl \psie \cdot \curl \BA_h)
}}
\|\bv+\psie \times \curl \BA_h\|_{\ome}
\right |
\leq
\osce
\end{equation}
with
\begin{equation}
\label{eq_definition_osce}
\osce \eq \CPe \he \|\psie \cdot \BJ - \pi_{\CTe,\qe}(\psie \cdot \BJ)\|_{\ome}.
\end{equation}
In addition, if $\BJ \in \BH^{\qe+1}(\CTe)$, we have
\begin{equation}
\label{eq_estimate_osce}
\osce \leq \frac{\CPe \|\psie\|_{L^\infty(\ome)}}{\pi^{\qe}}
\he^{\qe+1}
|\BJ|_{\BH^{\qe+1}(\CTe)}.
\end{equation}
\end{lemma}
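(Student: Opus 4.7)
The plan is to reduce both norms appearing in \eqref{eq_data_oscillation} to Dirichlet energy norms of scalar Riesz representants in $H^1_\star(\ome)$. Following the argument in the proof of Theorem~\ref{theorem_dual_characterization}, I introduce $r_\edge \in H^1_\star(\ome)$ as the unique solution of
\begin{equation*}
(\grad r_\edge, \grad w)_{\ome}
=
(\psie \cdot \BJ - \curl \psie \cdot \curl \BA_h, w)_{\ome}
-
(\psie \times \curl \BA_h, \grad w)_{\ome}
\quad \forall w \in H^1_\star(\ome),
\end{equation*}
so that $\|\BCR\|_{\star,\edge} = \|\grad r_\edge\|_{\ome}$. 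I also introduce $\tilde r_\edge \in H^1_\star(\ome)$ defined by the same problem with the scalar source $\psie \cdot \BJ - \curl \psie \cdot \curl \BA_h$ replaced by its $\pi_{\CTe,\qe}$-projection; the mixed--primal correspondence used in the proof of Theorem~\ref{theorem_dual_characterization} then identifies the minimum in \eqref{eq_data_oscillation} with $\|\grad \tilde r_\edge\|_{\ome}$, and well-posedness is ensured by \eqref{eq_compatibility_condition} together with the fact that $\pi_{\CTe,\qe}$ preserves integrals against constants.

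The key observation is that $\curl \psie$ is piecewise constant on $\CTe$ and $\curl \BA_h \in \BCP_\pe(\CTe)$, so $\curl \psie \cdot \curl \BA_h \in \CP_\pe(\CTe) \subset \CP_\qe(\CTe)$ is invariant under $\pi_{\CTe,\qe}$. Subtracting the two Riesz problems therefore yields
\begin{equation*}
(\grad (r_\edge - \tilde r_\edge), \grad w)_{\ome}
=
(\psie \cdot \BJ - \pi_{\CTe,\qe}(\psie \cdot \BJ), w)_{\ome}
\qquad \forall w \in H^1_\star(\ome).
\end{equation*}
Testing with $w = r_\edge - \tilde r_\edge \in H^1_\star(\ome)$, applying Cauchy--Schwarz, and using \eqref{eq_poincare_ome} to bound $\|r_\edge - \tilde r_\edge\|_{\ome} \leq \CPe \he \|\grad(r_\edge - \tilde r_\edge)\|_{\ome}$, I obtain $\|\grad(r_\edge - \tilde r_\edge)\|_{\ome} \leq \osce$, and the reverse triangle inequality closes \eqref{eq_data_oscillation}.

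For \eqref{eq_estimate_osce}, the task reduces to a cellwise approximation estimate for $\|\psie \cdot \BJ - \pi_{K, \qe}(\psie \cdot \BJ)\|_K$ on each $K \in \CTe$. Since $\psie|_K \in \BCP_1(K)$, the function $\psie \cdot \pi_{K, \qe-1}\BJ$ belongs to $\CP_\qe(K)$ and is thus an admissible competitor in the best $L^2$-approximation characterization of $\pi_{K, \qe}$, yielding
\begin{equation*}
\|\psie \cdot \BJ - \pi_{K, \qe}(\psie \cdot \BJ)\|_K
\leq
\|\psie\|_{L^\infty(K)} \|\BJ - \pi_{K, \qe-1}\BJ\|_K.
\end{equation*}
Applying \eqref{eq_poincare_Kq} cellwise and summing over $K \in \CTe$ with $h_K \leq \he$ then delivers the claimed decay bound. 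The principal obstacle in the whole argument is the passage from the constrained mixed minimization in \eqref{eq_definition_sige} to the primal Riesz problem for $\tilde r_\edge$; once this is set up, the rest consists of standard energy estimates and an elementary polynomial approximation argument.
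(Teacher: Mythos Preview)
Your proof is correct and follows essentially the same route as the paper: both introduce the two Riesz representants $r_\edge$ and $\tilde r_\edge$ (the paper calls the latter $r^\star$), subtract the variational problems using that $\curl\psie\cdot\curl\BA_h\in\CP_{\qe}(\CTe)$, test with the difference, apply the Poincar\'e inequality \eqref{eq_poincare_ome}, and finish with the reverse triangle inequality; the bound \eqref{eq_estimate_osce} is likewise obtained in both by inserting $\psie\cdot\pi_{\CTe,\qe-1}\BJ$ as a competitor and invoking \eqref{eq_poincare_Kq}.
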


\begin{proof}
Let $\ell \in \CE_h$. Following the proof of Theorem \ref{theorem_dual_characterization},
we know that $\|\BCR\|_{\ell,\star} = \|\grad r_\ell\|_{\ome}$ where
$r_\ell$ is the unique element of $H^1_\star(\ome)$ such that
\begin{align*}
(\grad r_\ell,\grad w)_{\ome}
=
(\psie \cdot \BJ-\curl \psie \cdot \curl \BA_h,w)_{\ome}
-
(\psie \times \curl \BA_h,\grad w)_{\ome}
\end{align*}
for all $w \in H^1_\star(\ome)$. Similarly, we have
\begin{equation*}
\min_{\substack{
\bv \in \BH_0(\ddiv,\ome)
\\
\div \bv = \pi_{\qe}(\psie \cdot \BJ-\curl \psie \cdot \curl \BA_h)
}}
\|\bv+\psie \times \curl \BA_h\|_{\ome}
=
\|\grad r^\star\|_{\ome}
\end{equation*}
where $r^\star$ is the unique element of $H^1_\star(\ome)$ such that
\begin{equation*}
(\grad r^\star,\grad w)_{\ome}
=
(\pi_{\qe}(\psie \cdot \BJ - \curl \psie \cdot \curl \BA_h),w)_{\ome}
-
(\psie \times \curl \BA_h,\grad w)_{\ome}
\end{equation*}
for all $w \in H^1_\star(\ome)$. Since $\curl \psie \cdot \curl \BA_h \in \CP_{\qe}(\CTe)$,
it follows that
\begin{align*}
(\grad (r_\edge-r^\star),\grad w)_{\ome}
&=
(\psie \cdot \BJ-\pi_{\CTe,\qe}(\psie \cdot \BJ),w)_{\ome}
\\
&\leq
\CPe \he \|\psie \cdot \BJ-\pi_{\qe} (\psie \cdot \BJ)\|_{\ome}\|\grad w\|_{\ome}.
\end{align*}
for all $w \in H^1_\star(\ome)$. Picking $w = r_\edge-r^\star$, we have
\begin{equation*}
\|\grad(r_\edge-r^\star)\|_{\ome}
\leq
\CPe \he \|\psie \cdot \BJ-\pi_{\qe} (\psie \cdot \BJ)\|_{\ome},
\end{equation*}
and \eqref{eq_data_oscillation} follows from the reverse triangle inequality.

Finally, since $\psie \in \BCP_1(\CTe)$, \eqref{eq_estimate_osce} follows from
\begin{align*}
\|\psie \cdot \BJ - \pi_{\CTe,\qe} (\psie \cdot \BJ)\|_{\ome}
&\leq
\|\psie \cdot \BJ - \psie \cdot (\pi_{\CTe,\qe-1}(\BJ))\|_{\ome}
\\
&\leq
\|\psie\|_{L^\infty(\ome)}\|\BJ - \pi_{\CTe,\qe-1}(\BJ)\|_{\ome}
\\
&\leq
\|\psie\|_{L^\infty(\ome)}\left (\frac{\he}{\pi} \right )^{\qe}|\BJ|_{\BH^{\qe}(\ome)},
\end{align*}
where we have used \eqref{eq_poincare_Kq} together with the fact
that $h_K \leq \he$ for all $K \in \CTe$.
\end{proof}

We are now ready to state the main result of this section, which is a direct consequence
of the three theorems of Section \ref{section_localization}, identity
\eqref{eq_dual_norm_error}, as well as estimates \eqref{eq_stable_discrete_minimization},
\eqref{eq_stable_discrete_minimization_reverse} and \eqref{eq_data_oscillation}.

\begin{theorem}[Edge-based error estimator]
\label{theorem_edge_estimator}
The estimates
\begin{equation}
\label{eq_discrete_reliability}
\|\curl(\BA-\BA_h)\|_\Omega
\leq
\sqrt{6} \Clift \left (\sum_{\edge \in \CE_h} (\eta_\edge^2 + \osce^2) \right )^{1/2}
\end{equation}
and
\begin{equation}
\label{eq_discrete_efficiency}
\eta_\edge \leq \Cste \Cconte \|\curl(\BA-\BA_h)\|_{\ome}  + \Cconte \osce
\qquad
\forall \edge \in \CE_h
\end{equation}
hold true.
\end{theorem}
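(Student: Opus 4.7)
The plan is to chain the abstract equivalence results from Section \ref{section_localization} with Proposition \ref{proposition_stable_discrete_minimization} and Lemma \ref{lemma_data_oscillation}; once those pieces are assembled, the two estimates drop out after some bookkeeping.

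For the reliability bound, I would first invoke \eqref{eq_dual_norm_error} to rewrite $\|\curl(\BA-\BA_h)\|_\Omega$ as the global dual norm $\|\BCR\|_{\star,\Omega}$, and then apply Theorem \ref{theorem_continuous_reliability} to produce the factor $\sqrt{6}\,\Clift$ together with a sum over the localized dual norms $\|\BCR\|_{\star,\edge}^2$. For each edge, Theorem \ref{theorem_dual_characterization} identifies $\|\BCR\|_{\star,\edge}$ with the continuous minimization \eqref{eq_definition_sige}, whose divergence constraint uses $\psie \cdot \BJ - \curl\psie \cdot \curl \BA_h$ exactly. Lemma \ref{lemma_data_oscillation} bridges the gap to the $\pi_{\qe}$-projected problem at the cost of $\osce$, and the trivial containment \eqref{eq_stable_discrete_minimization_reverse} bounds the projected continuous minimum by the discrete one, i.e.\ by $\eta_\edge$. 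Combining these steps gives $\|\BCR\|_{\star,\edge} \leq \eta_\edge + \osce$ for every edge, and \eqref{eq_discrete_reliability} follows after squaring, summing over edges, and folding the oscillation into the right-hand side with the elementary inequality $(a+b)^2 \leq 2(a^2+b^2)$ (the remaining combinatorial factor is absorbed into the stated constant).

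For the efficiency bound, the same chain is traversed in the opposite order. Proposition \ref{proposition_stable_discrete_minimization}, applied with $r = \pi_{\qe}(\psie \cdot \BJ - \curl\psie \cdot \curl \BA_h)$ and $\bg = \psie \times \curl \BA_h$, bounds $\eta_\edge$ by $\Cste$ times the continuous minimum on $\BH_0(\ddiv,\ome)$ with the projected divergence constraint. Lemma \ref{lemma_data_oscillation} then bounds that continuous minimum by $\|\BCR\|_{\star,\edge} + \osce$, and \eqref{eq_continuous_efficiency} delivers the final factor $\Cconte \|\curl(\BA-\BA_h)\|_{\ome}$. Rearranging yields \eqref{eq_discrete_efficiency}.

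No new ideas are needed at this stage, and I would not expect any genuine obstacle: the only delicacy is the distinction between the exact divergence constraint appearing in the dual characterization \eqref{eq_definition_sige} and its polynomial projection in \eqref{eq_definition_sigeh}. Lemma \ref{lemma_data_oscillation} was designed precisely to bridge this gap, and applying it symmetrically on both sides of the chain is what makes the reliability and efficiency estimates fall into place with the stated $p$-robust constants.
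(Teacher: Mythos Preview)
Your proposal is correct and mirrors the paper's own argument, which is stated as a one-line consequence of the three theorems of Section~\ref{section_localization}, identity~\eqref{eq_dual_norm_error}, Proposition~\ref{proposition_stable_discrete_minimization} (and its trivial reverse~\eqref{eq_stable_discrete_minimization_reverse}), and Lemma~\ref{lemma_data_oscillation}. The only caveat is bookkeeping on the explicit constants: your chain produces $\sqrt{12}\,\Clift$ in~\eqref{eq_discrete_reliability} (from the $(a+b)^2\le 2(a^2+b^2)$ step) and $\Cste\osce$ rather than $\Cconte\osce$ in~\eqref{eq_discrete_efficiency}, but these are harmless discrepancies in generic $\kappa_\edge$-dependent constants and do not affect the $p$-robustness or the validity of the approach.
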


\begin{remark}[An interpretation of what $\eta_\edge$ measures]
Assume that $\BJ \in \RT_{\pe}(\CTe)$ and that $\eta_\edge = 0$ for an edge $\edge \in \CE_h$.
Then, we have
\begin{equation*}
\psie \cdot \BJ - \curl \psie \cdot \curl \BA_h
=
\div (\psie \times \curl \BA_h)
=
-\curl \psie \cdot \curl \BA_h + \psie \cdot \curl \curl \BA_h,
\end{equation*}
meaning that $\psie \cdot (\curl \curl \BA_h-\BJ) = 0$. We then see that
$\BA_h$ locally satisfies the curl-curl problem strongly in $\ome$, at
least in the $\taue$ direction.
\end{remark}

\begin{remark}[Alternate construction by sequential sweeps]
We can decrease the computational cost for constructing $\sigeh$
by following the approach presented in \cite[Theorem 3.2]{chaumontfrelet_ern_vohralik_2021a}.
In this case, instead of solving the patchwise minimization in \eqref{eq_definition_sigeh},
we construct an alternative local contribution $\sigeh^\star$ by sweeping through the edge
patch. In this approach, for each $K \in \CTe$, $\sigeh^\star|_K$ is defined through an
elementwise minimization problem in $K$, similar to \eqref{eq_definition_sigeh}.
\end{remark}

\begin{remark}[$p$-adaptive reconstruction]
Let $\edge \in \CE_h$. In our presentation, we employed the assumption that
$\BA_h \in \ND_{\pe}(\CTe)$. It is possible to further take advantage of the fact
that $\BA_h \in \ND_{p_K}(K)$ for all $K \in \CTe$ for some $0 \leq p_K \leq \pe$.
Indeed, instead of seeking $\sigeh$ in the space $\RT_{\qe}(\CTe)$ (recall that
$\qe \geq \pe+1$), we can instead require that $\sigeh \in \RT_{p_K+1}(K)$ for all
$K \in \CTe$. In doing so, it is still true that \eqref{eq_stable_discrete_minimization}
holds true, but with a constant $\Cste$ that may depend on the distribution $\{p_K\}_{K \in \CTe}$.
Indeed, it is not clear wether constant $\Cste$ is polynomial-degree-robust in this case as,
in particular, the proof techniques employed in
\cite{braess_pillwein_schoberl_2009a,chaumontfrelet_ern_vohralik_2021a,ern_vohralik_2018a}
fail in this case.
\end{remark}

\section{Prager--Synge type estimates}
\label{section_equilibrated_estimator}

In this section, we elaborate a second error estimator. It hinges on a
new Prager--Synge type identity and a corresponding discrete equilibration
procedure. We actually show that the local contributions $\sigeh$ previously
introduced to build the edge-based estimator can be recombined to provide
equilibrated fields.

We begin with our new Prager--Synge type identity.

\begin{theorem}[Prager--Synge identity]
\label{theorem_prager_synge}
Assume that, for $1 \leq k \leq 3$, $\BS^k \in \BH_{\GN}(\ddiv,\Omega)$,
then we have
\begin{multline}
\label{eq_prager_synge}
(\curl(\BA-\BA_h),\curl \bv)_\Omega
=
\\
\sum_{k = 1}^3 \left \{
(\BJ_k-\div \BS^k,\bv_k)_\Omega
- 
(\BS^k + \cank \times \curl \BA_h,\grad \bv_k)_\Omega
\right \}
\end{multline}
for all $\bv \in \BH_{\GD}^1(\Omega)$. In addition, if
\begin{equation}
\label{eq_orthogonality_prager_synge}
(\div \BS^k-\BJ_k,r_h)_\Omega = 0 \quad \forall r_h \in \CP_0(\CT_h),
\end{equation}
then,
\begin{multline}
\label{eq_estimate_prager_synge}
\|\curl(\BA-\BA_h)\|_\Omega
\leq
\\
\Clift
\left (
\sum_{K \in \CT_h}
\sum_{k=1}^3
\left (\frac{h_K}{\pi}\|\BJ_k-\div \BS^k\|_K + \|\BS^k + \cank \times \curl \BA_h\|_K\right )^2
\right )^{1/2}.
\end{multline}
\end{theorem}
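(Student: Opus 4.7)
The plan is to handle the two parts of the statement separately: first prove the exact identity \eqref{eq_prager_synge} for smooth test functions $\bv \in \BH^1_{\GD}(\Omega)$ by direct manipulation, then combine it with the regular decomposition from Section~\ref{section_regular_decomposition} and the orthogonality \eqref{eq_orthogonality_prager_synge} to deduce the upper bound \eqref{eq_estimate_prager_synge}.

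For \eqref{eq_prager_synge}, I would start from the Galerkin equation satisfied by $\BA$, namely $(\curl(\BA-\BA_h),\curl\bv)_\Omega = (\BJ,\bv)_\Omega - (\curl\BA_h,\curl\bv)_\Omega$, valid for $\bv \in \BH^1_{\GD}(\Omega) \subset \BH_{\GD}(\ccurl,\Omega)$. Writing $\bv = \sum_{k=1}^3 \bv_k \cank$ with $\bv_k \in H^1_{\GD}(\Omega)$, the elementary identity $\curl(\bv_k \cank) = \grad\bv_k \times \cank$ gives $\curl\bv = \sum_k \grad\bv_k \times \cank$. The scalar triple product then yields $(\curl\BA_h,\grad\bv_k\times\cank)_\Omega = (\cank\times\curl\BA_h,\grad\bv_k)_\Omega$. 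To bring the $\BS^k$ contributions into play, I would add to each summand the vanishing quantity $(\BS^k,\grad\bv_k)_\Omega + (\div\BS^k,\bv_k)_\Omega = 0$, which is valid because $\BS^k \in \BH_{\GN}(\ddiv,\Omega)$ can be tested against $\bv_k \in H^1_{\GD}(\Omega)$ (noting $\GN^{\rm c} = \GD$ in the definition of $\BH_{\GN}(\ddiv,\Omega)$). Rearranging gives exactly \eqref{eq_prager_synge}.

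For the estimate, since $\BA-\BA_h \in \BH_{\GD}(\ccurl,\Omega)$, duality provides $\|\curl(\BA-\BA_h)\|_\Omega = \sup (\curl(\BA-\BA_h),\curl\bv)_\Omega$ over $\bv \in \BH_{\GD}(\ccurl,\Omega)$ with $\|\curl\bv\|_\Omega \leq 1$. The regular decomposition \eqref{eq_regular_decomposition} provides $\bw \in \BH^1_{\GD}(\Omega)$ with $\curl\bw = \curl\bv$ and $\|\grad\bw\|_\Omega \leq \Clift$; since $(\curl(\BA-\BA_h),\curl\bv)_\Omega = (\curl(\BA-\BA_h),\curl\bw)_\Omega$, I can apply \eqref{eq_prager_synge} to $\bw$. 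Splitting the global integrals elementwise, the first sum $\sum_k (\BJ_k-\div\BS^k,\bw_k)_\Omega$ is recast as $\sum_{K,k}(\BJ_k-\div\BS^k,\bw_k-\pi_{K,0}\bw_k)_K$ thanks to \eqref{eq_orthogonality_prager_synge}, and the elementwise Poincar\'e inequality \eqref{eq_poincare_K} bounds $\|\bw_k-\pi_{K,0}\bw_k\|_K \leq (h_K/\pi)\|\grad\bw_k\|_K$. The second sum is bounded directly by Cauchy--Schwarz on each $K$. A final discrete Cauchy--Schwarz over the pairs $(K,k)$ factors out $(\sum_{k}\|\grad\bw_k\|_\Omega^2)^{1/2} = \|\grad\bw\|_\Omega \leq \Clift$, leaving exactly the right-hand side of \eqref{eq_estimate_prager_synge}.

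The only delicate bookkeeping lies in verifying the integration-by-parts $(\BS^k,\grad\bv_k)_\Omega + (\div\BS^k,\bv_k)_\Omega = 0$, which reduces to correctly matching the boundary parts via $\Gamma^{\rm c} = \GD$ in the definition of $\BH_{\GN}(\ddiv,\Omega)$, and in applying the mean-value subtraction \emph{elementwise} so that \eqref{eq_orthogonality_prager_synge} (which is tested only against piecewise constants on $\CT_h$) can be invoked. No analogous orthogonality is needed for the second term $(\BS^k+\cank\times\curl\BA_h,\grad\bw_k)_\Omega$, which is kept as is; the remaining arguments reduce to vector calculus and Cauchy--Schwarz.
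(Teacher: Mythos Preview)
Your proposal is correct and follows essentially the same approach as the paper: the identity is obtained by writing $\curl\bv=\sum_k\grad\bv_k\times\cank$ and integrating $\BS^k$ by parts against $\bv_k$, and the estimate follows from the regular decomposition, the elementwise Poincar\'e inequality after subtracting $\pi_{K,0}\bw_k$ via \eqref{eq_orthogonality_prager_synge}, and Cauchy--Schwarz. The only cosmetic difference is that you phrase the integration-by-parts step as ``adding the vanishing quantity $(\BS^k,\grad\bv_k)_\Omega+(\div\BS^k,\bv_k)_\Omega=0$'' and make the duality argument for the norm explicit, whereas the paper absorbs these into a one-line computation.
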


\begin{proof}
Let $\bv \in \BH^1_{\GD}(\Omega)$. We have
\begin{equation}
\label{tmp_ps1}
(\curl(\BA-\BA_h),\curl \bv)_\Omega
=
(\BJ,\bv)_\Omega-(\curl \BA_h,\curl \bv)_\Omega.
\end{equation}
On the one hand, we have
\begin{equation}
\label{tmp_ps2}
(\BJ,\bv)_\Omega
=
\sum_{k = 1}^3 (\BJ_k,\bv_k)_\Omega
=
\sum_{k = 1}^3
\left \{
(\BJ_k-\div \BS_k,\bv_k)_\Omega
-
(\BS_k,\grad \bv_k)_\Omega
\right \}
\end{equation}
On the other hand, it holds that
\begin{equation*}
\curl \bv
=
\sum_{k=1}^d \curl (\bv_k \cank)
=
\sum_{k=1}^d \grad \bv_k \times \cank,
\end{equation*}
leading to
\begin{align}
\label{tmp_ps3}
(\curl \BA_h,\curl \bv)_\Omega
&=
\sum_{k=1}^d (\curl \BA_h,\grad \bv_k \times \cank)_\Omega
\\
&=
\nonumber
\sum_{k=1}^d (\cank \times \curl \BA_h,\grad \bv_k)_\Omega.
\end{align}
Identity \eqref{eq_prager_synge} then easily follows from
\eqref{tmp_ps1}, \eqref{tmp_ps2} and \eqref{tmp_ps3}.
Finally, estimate \eqref{eq_estimate_prager_synge} is a direct
consequence of \eqref{eq_prager_synge}, orthogonality
property \eqref{eq_orthogonality_prager_synge} together
with elemenwise Poincar\'e inequality \eqref{eq_poincare_K}
and the discussion of Section \ref{section_regular_decomposition},
since
\begin{equation*}
(\BJ_k-\div \BS^k,\bv_k)_K
=
(\BJ_k-\div \BS^k,\bv_k-\pi_{K,0} \bv_k)_K
\leq
\frac{h_K}{\pi} \|\BJ-\div \BS^k\|_K\|\grad \bv_k\|_K.
\end{equation*}
\end{proof}

\begin{remark}[The constant $\Clift$]
\label{remark_clift}
The constant $\Clift$ is the price we pay for working with a test function in
$\BH^1_{\GD}(\Omega)$ instead of $\BH_{\GD}(\ccurl,\Omega)$. Specifically,
starting from a general function $\bv \in \BH_{\GD}(\ccurl,\Omega)$, we
introduce a function $\bw \in \BH^1_{\GD}(\Omega)$ such that $\curl \bv = \curl \bw$.
When doing this operation, there is no reason to think that
$\|\grad \bw\|_\Omega \leq \|\curl \bv\|_\Omega$ in the general case, hence the
need for the constant $\Clift$. However, for our purposes, we do not need to impose
that $\curl \bv = \curl \bw$. Indeed, we simply need that
$(\curl(\BA-\BA_h),\curl \bv)_\Omega = (\curl(\BA-\BA_h),\curl \bw)$, which is a much
less demanding condition. Although the author is not currently aware of a way to take
advantage of this idea, it hints toward the fact that a sharper estimate without the
constant $\Clift$ may be achieved. We further point out that this observation is not
limited to the particular estimators considered in this work, but to general a posteriori
estimators using regular decompositions
\cite{chaumontfrelet_ern_vohralik_2021a,nicaise_creuse_2003a,schoberl_2008a}.
A similar remark also holds true for the upper bounds of Theorems
\ref{theorem_continuous_reliability} and \ref{theorem_edge_estimator}.
\end{remark}

\begin{remark}[Improved oscillation term]
Under the additional assumption that $(\div \BS^k-\BJ_k,r_h)_\Omega = 0$
for all $r_h \in \CP_q(\CT_h)$ for some $q \geq 0$, it is possible to improve the
factor $h_K/\pi$ in the oscillation term to $Ch_K/q$ (see, e.g.,
\cite[Lemma 4.1]{babuska_suri_1987a}). For the sake of simplicity though, we only focus
on the simpler version, in particular because the constant appearing in the improved version is
not easily computable in practice.
\end{remark}

The next step is to provide a discrete construction of fields
$\BS^k_h$ satisfying the requirements of of Theorem \ref{theorem_prager_synge}.
As previously advertised, this is easily achieved by recombining the local
contributions $\sigeh$ introduced at \eqref{eq_definition_sigeh}. We thus set,
for $1 \leq k \leq 3$, the field
\begin{equation}
\BS^k_h \eq \sum_{\edge \in \CE_h} (\taue \cdot \cank) \sigeh.
\end{equation}

\begin{lemma}[Discrete equilibration]
\label{lemma_equilibration}
For $1 \leq k \leq 3$, we have $\BS_h^k \in \BH_{\GN}(\ddiv,\Omega)$ with
\begin{equation}
\label{eq_discrete_equilibration}
(\div \BS_h^k - \BJ_k,r_h)_\Omega = 0 \quad \forall r_h \in \CP_0(\CT_h).
\end{equation}
\end{lemma}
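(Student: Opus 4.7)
The argument splits naturally into verifying the boundary/conformity property $\BS_h^k \in \BH_{\GN}(\ddiv,\Omega)$ and verifying the elementwise divergence identity.

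For the conformity claim, I would first show that the zero-extension of each $\sigeh$ to $\Omega$ lies in $\BH_{\GN}(\ddiv,\Omega)$. By the definition of $\BH_0(\ddiv,\ome)$ in Section \ref{section_local_spaces}, in both cases ($\edge \not\subset \overline{\GD}$ and $\edge \subset \overline{\GD}$) the field $\sigeh$ has vanishing normal trace on every part of $\partial\ome$ that is \emph{not} contained in $\GD$. In particular, the normal trace vanishes on $\partial\ome \setminus \partial\Omega$ (interior faces of the patch) and on $\partial\ome \cap \GN$. The first gives global $\BH(\ddiv,\Omega)$-conformity of the zero extension, and the second yields the vanishing normal trace on $\GN$ encoded in membership of $\BH_{\GN}(\ddiv,\Omega)$. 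Since $\BS_h^k$ is a finite linear combination of such extensions with scalar coefficients $\taue\cdot\cank$, it inherits both properties.

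For the divergence identity, since $r_h \in \CP_0(\CT_h)$ it suffices to check that
\[
\int_K \div \BS_h^k \;=\; \int_K \BJ_k \qquad \forall K \in \CT_h.
\]
Only edges $\edge \in \CE_h$ belonging to $K$ contribute to $\BS_h^k|_K$, since $\supp \psie = \overline{\ome}$. For such an edge, the constraint in \eqref{eq_definition_sigeh} gives $\div\sigeh = \pi_{\qe}(\psie\cdot\BJ - \curl\psie\cdot\curl\BA_h)$ on $\ome$. Because constants belong to $\CP_{\qe}(K)$, the $L^2$-projection $\pi_{\qe}$ preserves the mean value on $K$, so
\[
\int_K \div \sigeh \;=\; \int_K \bigl(\psie\cdot\BJ - \curl\psie\cdot\curl\BA_h\bigr).
\]

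The concluding step is an application of the partition of unity \eqref{eq_partition_of_unity} to the constant field $\bw = \cank$, which yields on $K$ the two identities
\[
\sum_{\edge \in \CE_h,\, K \in \CTe} (\taue\cdot\cank)\,\psie\big|_K \;=\; \cank,
\qquad
\sum_{\edge \in \CE_h,\, K \in \CTe} (\taue\cdot\cank)\,\curl\psie\big|_K \;=\; \bzero,
\]
the second being obtained by taking the curl of the first (or from $\curl \cank = \bzero$). Combining these with the previous display gives
\[
\int_K \div \BS_h^k \;=\; \int_K \bigl(\cank\cdot\BJ\bigr) - \int_K \bigl(\bzero\cdot\curl\BA_h\bigr) \;=\; \int_K \BJ_k,
\]
which is exactly \eqref{eq_discrete_equilibration}.

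The only delicate point is bookkeeping with the two boundary flavors in the first step; once one observes that in either case $\sigeh$ has zero normal trace on $\partial\ome \setminus \GD$, the rest is a clean accounting using the partition-of-unity identity and the mean-value-preservation of $\pi_{\qe}$.
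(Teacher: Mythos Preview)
Your proposal is correct and follows essentially the same approach as the paper: conformity is deduced from $\sigeh \in \BH_0(\ddiv,\ome)$ (via zero-extension), and the divergence identity is checked elementwise by dropping the projection $\pi_{\qe}$ against constants and invoking the partition of unity \eqref{eq_partition_of_unity} with $\bw=\cank$ together with $\curl\cank=\bzero$. Your write-up is simply a bit more explicit than the paper's, which states the conformity part in one line and keeps the sums over all of $\CE_h$ rather than restricting to edges of $K$.
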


\begin{proof}
Let $k \in \{1,2,3\}$. That $\BS_h^k \in \BH_{\GN}(\ddiv,\Omega)$
is a direct consequence of the fact that $\sigeh \in \BH_0(\ddiv,\ome)$
for all $\edge \in \CE_h$. Then, consider an element $K \in \CT_h$ and $r_h \in \CP_0(K)$.
Recalling \eqref{eq_definition_sigeh}, we see that
\begin{align*}
(\div \BS_h^K,r_h)_K
&=
\sum_{\edge \in \CE_h} (\taue \cdot \cank) (\div \sigeh,r_h)_K
\\
&=
\sum_{\edge \in \CE_h} (\taue \cdot \cank) \left (\pi_{\qe}
\left (\psie \cdot \BJ - \curl \psie \cdot \curl \BA_h\right ),r_h\right )_K
\\
&=
\sum_{\edge \in \CE_h}
(\taue \cdot \cank) \left (\psie \cdot \BJ - \curl \psie \cdot \curl \BA_h,r_h\right )_K.
\end{align*}
By linearity, and recalling \eqref{eq_partition_of_unity}, we have
\begin{equation*}
(\div \BS_h^k,r_h)_K
=
(\cank \cdot \BJ - \curl \cank \cdot \curl \BA_h,r_h)_K
=
(\BJ_k,r_h)_K,
\end{equation*}
leading to \eqref{eq_discrete_equilibration}.
\end{proof}

Having introduced the equilibrated fields $\BS_h^k$, we simply
define our estimator with the elementwise contributions in the right-hand
side of \eqref{eq_estimate_prager_synge}. We thus set
\begin{equation}
\label{eq_definition_etaK}
\eta_K^k \eq \|\cank \times \curl \BA_h + \BS_h^k\|_K
\end{equation}
for each element $K \in \CT_h$ and $k \in \{1,2,3\}$. As we establish below,
this estimator is reliable and efficient.

\begin{theorem}[Equilibrated estimator]
\label{theorem_equilibrated_estimator}
The following upper bound
\begin{equation}
\label{eq_equilibrated_reliability}
\|\curl(\BA-\BA_h)\|_\Omega
\leq
\Clift
\left (
\sum_{K \in \CT_h} \sum_{k=1}^3\left (
\eta_K^k + \oscK^k
\right )^2
\right )^{1/2}
\end{equation}
holds true with
\begin{equation}
\oscK^k \eq \frac{h_K}{\pi}\|\div \BS_h^k - \BJ_k\|_K \qquad \forall K \in \CT_h,
\quad
1 \leq k \leq 3.
\end{equation}
In addition, we have the lower bounds
\begin{equation}
\label{eq_equilibrated_efficiency}
\sum_{k=1}^3 (\eta_K^k)^2
\leq
6 \sum_{\edge \in \CE_K} \left (\Cste \Cconte \|\curl(\BA-\BA_h)\|_{\ome} + \Cconte \osce\right )^2
\end{equation}
for all $K \in \CT_h$.
\end{theorem}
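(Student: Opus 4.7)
The plan is to deduce the two bounds almost directly from ingredients already established: for reliability, the abstract Prager--Synge identity of Theorem \ref{theorem_prager_synge} applied to the discrete equilibrated fields $\BS_h^k$, and for efficiency, the partition-of-unity identity \eqref{eq_partition_of_unity} applied to the constant vectors $\cank$, combined with the edge-based efficiency estimate \eqref{eq_discrete_efficiency}.

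For the reliability estimate \eqref{eq_equilibrated_reliability}, I would check that $\BS^k := \BS_h^k$ fits the assumptions of Theorem \ref{theorem_prager_synge}: the inclusion $\BS_h^k \in \BH_{\GN}(\ddiv,\Omega)$ and the orthogonality \eqref{eq_orthogonality_prager_synge} are exactly the content of Lemma \ref{lemma_equilibration}. Recognizing the two elementwise terms of \eqref{eq_estimate_prager_synge} as $\eta_K^k$ (from \eqref{eq_definition_etaK}) and $\oscK^k$ (from the statement) then yields \eqref{eq_equilibrated_reliability} verbatim, after a triangle inequality inside the Euclidean sum.

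For the local efficiency \eqref{eq_equilibrated_efficiency}, the key idea is that on a fixed tetrahedron $K$, only the edges $\edge \in \CE_K$ contribute to both $\BS_h^k$ and to $\cank$ via the partition of unity \eqref{eq_partition_of_unity}, since $\supp \psie = \overline{\ome}$. Writing $\cank = \sum_{\edge \in \CE_K} (\cank \cdot \taue) \psie$ pointwise on $K$ and using the definition of $\BS_h^k$, I obtain
\begin{equation*}
\cank \times \curl \BA_h + \BS_h^k
=
\sum_{\edge \in \CE_K} (\cank \cdot \taue)\bigl(\psie \times \curl \BA_h + \sigeh\bigr)
\quad \text{on } K.
\end{equation*}
Squaring, applying Cauchy--Schwarz over $\edge \in \CE_K$, and using $\|\psie \times \curl \BA_h + \sigeh\|_K \leq \|\psie \times \curl \BA_h + \sigeh\|_{\ome} = \eta_\edge$ gives $(\eta_K^k)^2 \leq \bigl(\sum_{\edge \in \CE_K} |\cank \cdot \taue|^2\bigr)\bigl(\sum_{\edge \in \CE_K} \eta_\edge^2\bigr)$. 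Summing over $k \in \{1,2,3\}$ and exchanging the order of summation, I exploit $\sum_{k=1}^3 |\cank \cdot \taue|^2 = |\taue|^2 = 1$ and the fact that a tetrahedron has exactly six edges to obtain
\begin{equation*}
\sum_{k=1}^3 (\eta_K^k)^2 \leq 6 \sum_{\edge \in \CE_K} \eta_\edge^2.
\end{equation*}
Finally, plugging the edge-wise efficiency \eqref{eq_discrete_efficiency} of Theorem \ref{theorem_edge_estimator} into each $\eta_\edge$ produces the claimed bound \eqref{eq_equilibrated_efficiency}.

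Neither half of the argument is delicate; the only nontrivial steps are verifying that the sign/orientation conventions in \eqref{eq_definition_etaK} and in Theorem \ref{theorem_prager_synge} line up (so that the $\|\cdot\|_K$-terms match) and keeping track of the combinatorial factor in the efficiency count. The heavy lifting — the new Prager--Synge identity, the stable discrete minimization, and the discrete equilibration property of $\BS_h^k$ — has already been done in Theorem \ref{theorem_prager_synge}, Proposition \ref{proposition_stable_discrete_minimization}, and Lemma \ref{lemma_equilibration}, so the present theorem is essentially an assembly step.
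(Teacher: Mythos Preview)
Your proposal is correct and follows essentially the same route as the paper: reliability is obtained by feeding $\BS_h^k$ into Theorem~\ref{theorem_prager_synge} via Lemma~\ref{lemma_equilibration}, and efficiency comes from rewriting $\cank \times \curl \BA_h + \BS_h^k$ on $K$ as $\sum_{\edge \in \CE_K}(\cank\cdot\taue)(\psie\times\curl\BA_h+\sigeh)$, bounding by $\eta_\edge$, applying Cauchy--Schwarz over the six edges, using $\sum_k|\cank\cdot\taue|^2=1$, and finally invoking \eqref{eq_discrete_efficiency}. The only cosmetic difference is the order in which the Cauchy--Schwarz factor $6$ and the identity $|\taue|^2=1$ are extracted; the resulting bound is identical.
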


\begin{proof}
On the one hand, \eqref{eq_equilibrated_reliability} is a direct
consequence of Theorem \ref{theorem_prager_synge} and Lemma \ref{lemma_equilibration}.
On the other hand, we have
\begin{align*}
\|\BS_h^k + \cank \times \curl \BA_h\|_K
&=
\left \|
\sum_{\edge \in \CE_K} (\taue \cdot \cank) \sigeh
+
\left (
\sum_{\edge \in \CE_K} (\cank \cdot \taue) \psie
\right )
\times \curl \BA_h
\right \|_K
\\
&\leq
\sum_{\edge \in \CE_K} |\cank \cdot \taue| \cdot \|\sigeh + \psie \times \curl \BA_h\|_K
\\
&\leq
\sum_{\edge \in \CE_K} |\cank \cdot \taue| \cdot \|\sigeh + \psie \times \curl \BA_h\|_{\ome}
\\
&=
\sum_{\edge \in \CE_K} |\cank \cdot \taue| \eta_\ell,
\end{align*}
and \eqref{eq_equilibrated_efficiency} follows from \eqref{eq_discrete_efficiency}
using Cauchy-Schwarz inequality
\begin{multline*}
\sum_{k=1}^3 (\eta_K^k)^2
=
\sum_{k=1}^3 \|\BS_h^k+\cank \times \curl \BA_h\|_K^2
\\
\leq
\sum_{k=1}^3 \left (\sum_{\edge \in \CE_K} |\cank \cdot \taue| \eta_\edge\right )^2
\leq
6
\sum_{\edge \in \CE_K} \sum_{k=1}^3 |\cank \cdot \taue|^2 \eta_\edge^2
\end{multline*}
and observing that
\begin{equation*}
\sum_{k=1}^3 |\cank \cdot \taue|^2 = |\taue|^2 = 1.
\end{equation*}
\end{proof}


\section{Numerical examples}
\label{section_numerical_examples}

\subsection{Settings} This section presents a set numerical examples.
We first describe the general setting.

\subsubsection{Estimators}

For the sake of shortness, we disregard all the oscillations terms. That way, we can simply set
\begin{equation*}
\eta_K^2 \eq \sum_{k=1}^3 (\eta_K^k)^2,
\end{equation*}
for all $K \in \CT_h$ and
\begin{equation*}
\eta_{\rm edge}^2 \eq 6\sum_{\edge \in \CE_h} \eta_\edge^2,
\qquad
\eta_{\rm cell}^2 \eq \sum_{K \in \CT_h} \eta_K^2.
\end{equation*}

\subsubsection{Discrete solution}

The discrete solution $\BA_h$ is computed using N\'ed\'elec elements
of uniform degree $p \geq 0$ through the usual Galerkin formulation.
For the sake of simplicity, we only consider cases where $\Omega$
is simply connected and $\GD = \partial \Omega$, so that
$\BLba_{\GD}(\Omega) = \grad\left( H^1_{\GD}(\Omega)\right )$. This leads to the definition
of $\BA_h$ has the unique element of $\ND_p(\CT_h) \cap \BH_{\GD}(\ccurl,\Omega)$
such that
\begin{equation*}
(\curl \BA_h,\curl \bv_h)_\Omega = (\BJ,\bv_h)_\Omega
\qquad
(\BA_h,\grad q_h)_\Omega = 0
\end{equation*}
for all $\bv_h \in \ND_p(\CT_h) \cap \BH_{\GD}(\ccurl,\Omega)$ and
$q_h \in \CP_{p+1}(\CT_h) \cap H^1_{\GD}(\Omega)$. Remark in particular
that $\BA_h$ satisfies the assumptions of Section \ref{section_discrete_solution}
with $\pe = p$ for all $\edge \in \CE_h$. The estimator is then computed
with the lowest polynomial degree possible, namely $\qe = q \eq p+1$ for all $\edge \in \CE_h$.

\subsubsection{Error evaluation}

In the first two examples, the analytic solution $\BA$ to the problem is
available, so that we can readily compute the true error
$\err \eq \|\curl(\BA-\BA_h)\|_\Omega$ for comparison purposes. In the last experiment however,
we do not have access to $\BA$, and if $\BA_h \in \ND_p(\CT_h) \cap \BH_{\GD}(\Omega)$
we assess the error using the quantity $\err \eq \|\curl(\widetilde \BA_h-\BA_h)\|_\Omega$,
where $\widetilde \BA_h \in \ND_{p+2}(\CT_h) \cap \BH_{\GD}(\Omega)$ is computed using
the same mesh than $\BA_h$, but with a higher polynomial degree.

\subsubsection{Mesh generation}

The meshes we employ are generated with with \gmsh \cite{geuzaine_remacle_2009a}
and \mmg \cite{dobrzynski_2012a}. Starting from a file ``{\tt geom.geo}'' describing
the geometry of the experiment, we generate a first tetrahedral mesh using the command
``{\tt gmsh -3 geom.geo -format mesh}''. This generates a mesh stored in ``{\tt geom.mesh}''
that is then passed to \mmg for further refinements. Specifically, when we talk about a
``mesh of size $\hmax$'' when it has been generated using the command
``{\tt mmg3D -in geom.mesh -out mesh.mesh -hmax $\hmax$}'', which generates two files:
``{\tt mesh.mesh}'' describing the mesh, and ``{\tt mesh.sol}'' describing the mesh size
around each vertex.

We also employ \mmg for iterative mesh refinements. In this case, based on
finite element computation with the mesh described by the files ``{\tt mesh.mesh}''
and ``{\tt mesh.sol}'' we create a new mesh the following way:
\begin{enumerate}
\item Find an ordering $\iota: \{1,\dots,\sharp \CE_h\} \to \CE_h$ such that
$j \to \eta_{\iota(j)}$ is decreasing.
\item Select the smallest integer $m$ such that
\begin{equation*}
\sum_{j=1}^m \eta_{\iota(j)}^2 \leq \theta \sum_{\edge \in \CE_h} \eta_\edge^2,
\qquad
\theta \eq 0.1.
\end{equation*}
\item Mark all the vertices associated with the edges $\iota(j)$, $j \in \{1,\dots,m\}$.
\item Generate a new file ``{\tt refinement.sol}'' where the mesh size associated with
all the marked vertices is divided by two.
\item Generate the refined mesh with the command
``{\tt mmg3D -in mesh.mesh -sol refinement.sol -out refined\_mesh.mesh -hgrad 10.}''.
\end{enumerate}
We also employ the equilibrated estimator $\{\eta_K\}_{K \in \CT_h}$ instead of
$\{\eta_\ell\}_{\edge \in \CE_h}$. In this case, we follow the same procedure with
edges replaced by cells.

\subsubsection{Quantities of interest and legends}

Throughout this section, we will focus on 5 quantities of interest, namely:
the error $\err$, the summed estimators
$\eta_{\rm edge}$ and $\eta_{\rm cell}$, and the effectivity indices
$\effe \eq \eta_{\rm edge}/\err$ and $\effc \eq \eta_{\rm cell}/\err$.
These quantities will be represented on two kinds of figures.
On the one hand, we will display the error, and summed estimator
on ``Error and estimators'' plot with the following legend:
\\
\begin{tikzpicture}
\draw[white] (-2,-1) rectangle (0,1);

\draw[black,solid] (0,0) -- node[anchor=north]{\color{black} $\err$} (2,0);

\draw[blue,dotted] (3,0) -- node[anchor=north]{\color{black} $\eta_{\rm edge}$} (5,0);

\draw[red,dashed] (6,0) -- node[anchor=north]{\color{black} $\eta_{\rm cell}$} (8,0);

\draw[black,fill=black] (1-.075,0-.075) -- (1+.075,0-.075) -- (1.,.075) -- cycle;

\draw[blue,fill=blue] (3.925,-.075) rectangle (4.075,.075);

\draw[red,fill=red] (7,0) circle(.075);
\end{tikzpicture}
\\
On the other hand, ``Effectivity indices'' figures have the following legend:
\\
\\
\begin{tikzpicture}
\draw[white] (-2,-1) rectangle (0,1);

\draw[blue,dotted] (3,0) -- node[anchor=north]{\color{black} $\effe$} (5,0);

\draw[red,dashed] (6,0) -- node[anchor=north]{\color{black} $\effc$} (8,0);

\draw[blue,fill=blue] (3.925,-.075) rectangle (4.075,.075);

\draw[red,fill=red] (7,0) circle(.075);
\end{tikzpicture}

\subsection{Smooth solution in a cube}

We start by considering the unit cube $\Omega \eq (0,1)^3$
together with the load term
\begin{equation*}
\BJ
\eq
(3\pi)^2
\left (
\begin{array}{c}
 \cos(\pi\bx_1)\sin(\pi\bx_2)\sin(\pi\bx_3)
\\
-\sin(\pi\bx_1)\cos(\pi\bx_2)\sin(\pi\bx_3)
\\
0
\end{array}
\right )
\end{equation*}
One readily sees that $\div \BJ = 0$ and that the associated solution is
\begin{equation*}
\BA
\eq
\left (
\begin{array}{c}
 \cos(\pi\bx_1)\sin(\pi\bx_2)\sin(\pi\bx_3)
\\
-\sin(\pi\bx_1)\cos(\pi\bx_2)\sin(\pi\bx_3)
\\
0
\end{array}
\right ).
\end{equation*}

We first consider a ``$h$-convergence'' example where we fix the polynomial degree $p$,
and consider a sequence of uniform meshes generated by \mmg with $\hmax \eq 1,1/2,1/4,1/16$
and $1/32$. Figure \ref{figure_cube_hconv} presents the results. As can be seen from the
left-panel, the expected convergence rate in $\CO(h^{p+1})$ is achieved. The right-panel
illustrates that both estimators are reliable and efficient, since the efficiency indices
are grater that one and independent of $h$.

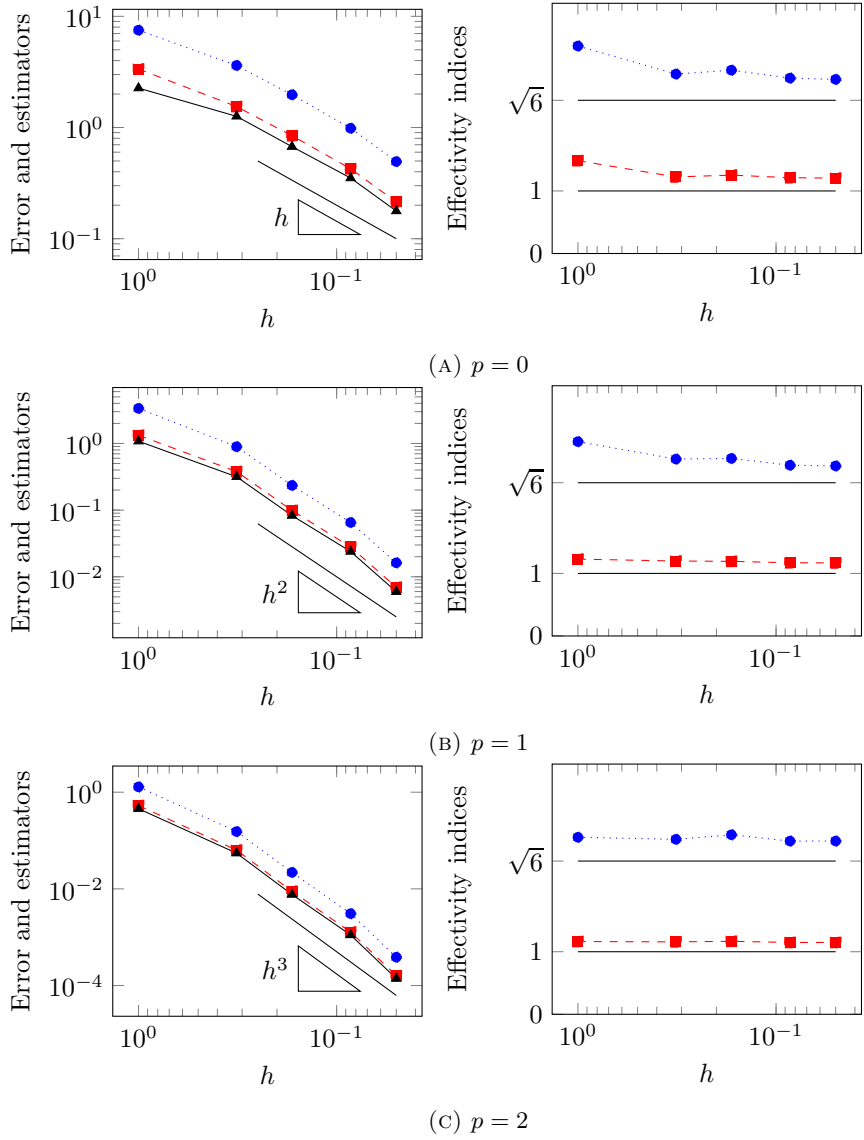
\begin{figure}
\begin{minipage}{\linewidth}
\begin{minipage}{.45\linewidth}
\begin{tikzpicture}
\begin{axis}
[
	width  = \linewidth,
	xlabel = {$h$},
	ylabel = {Error and estimators},
	ymode  = log,
	xmode  = log,
	x dir  = reverse
]

\plot[mark=*,color=blue,dotted]
table[x expr = \thisrow{h}/1000, y expr = sqrt(6)*\thisrow{etae}]
{figures/cube/data/P0.txt};

\plot[mark=square*,color=red,dashed]
table[x expr = \thisrow{h}/1000, y = etac]
{figures/cube/data/P0.txt};

\plot[mark=triangle*,color=black,solid]
table[x expr = \thisrow{h}/1000, y = err]
{figures/cube/data/P0.txt};

\plot[solid,black,domain=0.05:.25] {2*x};

\SlopeTriangle{.6}{-.2}{.1}{-1}{$h$}{}

\end{axis}
\end{tikzpicture}
\end{minipage}
\begin{minipage}{.45\linewidth}
\begin{tikzpicture}
\begin{axis}
[
	width  = \linewidth,
	xlabel = {$h$},
	ylabel = {Effectivity indices},
	xmode  = log,
	x dir  = reverse,
	ymin   = 0,
	ymax   = 4,
	ytick = {0,1,2.44948974278317809819},
	yticklabels={0,1,$\sqrt 6$}
]

\plot[mark=*,color=blue,dotted]
table[x expr = \thisrow{h}/1000, y expr = sqrt(6)*\thisrow{etae}/\thisrow{err}]
{figures/cube/data/P0.txt};

\plot[mark=square*,color=red,dashed]
table[x expr = \thisrow{h}/1000, y expr = \thisrow{etac}/\thisrow{err}]
{figures/cube/data/P0.txt};

\plot[solid,black,domain=0.05:1] {1};
\plot[solid,black,domain=0.05:1] {2.44948974278317809819};

\end{axis}
\end{tikzpicture}
\end{minipage}
\subcaption{$p=0$}
\end{minipage}

\begin{minipage}{\linewidth}
\begin{minipage}{.45\linewidth}
\begin{tikzpicture}
\begin{axis}
[
	width  = \linewidth,
	xlabel = {$h$},
	ylabel = {Error and estimators},
	ymode  = log,
	xmode  = log,
	x dir  = reverse
]

\plot[mark=*,color=blue,dotted]
table[x expr = \thisrow{h}/1000, y expr = sqrt(6)*\thisrow{etae}]
{figures/cube/data/P1.txt};

\plot[mark=square*,color=red,dashed]
table[x expr = \thisrow{h}/1000, y = etac]
{figures/cube/data/P1.txt};

\plot[mark=triangle*,color=black,solid]
table[x expr = \thisrow{h}/1000, y = err]
{figures/cube/data/P1.txt};

\plot[solid,black,domain=0.05:.25] {x*x};

\SlopeTriangle{.6}{-.2}{.1}{-2}{$h^2$}{}

\end{axis}
\end{tikzpicture}
\end{minipage}
\begin{minipage}{.45\linewidth}
\begin{tikzpicture}
\begin{axis}
[
	width  = \linewidth,
	xlabel = {$h$},
	ylabel = {Effectivity indices},
	xmode  = log,
	x dir  = reverse,
	ymin   = 0,
	ymax   = 4,
	ytick = {0,1,2.44948974278317809819},
	yticklabels={0,1,$\sqrt 6$}
]

\plot[mark=*,color=blue,dotted]
table[x expr = \thisrow{h}/1000, y expr = sqrt(6)*\thisrow{etae}/\thisrow{err}]
{figures/cube/data/P1.txt};

\plot[mark=square*,color=red,dashed]
table[x expr = \thisrow{h}/1000, y expr = \thisrow{etac}/\thisrow{err}]
{figures/cube/data/P1.txt};

\plot[solid,black,domain=0.05:1] {1};
\plot[solid,black,domain=0.05:1] {2.44948974278317809819};

\end{axis}
\end{tikzpicture}
\end{minipage}
\subcaption{$p=1$}
\end{minipage}

\begin{minipage}{\linewidth}
\begin{minipage}{.45\linewidth}
\begin{tikzpicture}
\begin{axis}
[
	width  = \linewidth,
	xlabel = {$h$},
	ylabel = {Error and estimators},
	ymode  = log,
	xmode  = log,
	x dir  = reverse
]

\plot[mark=*,color=blue,dotted]
table[x expr = \thisrow{h}/1000, y expr = sqrt(6)*\thisrow{etae}]
{figures/cube/data/P2.txt};

\plot[mark=square*,color=red,dashed]
table[x expr = \thisrow{h}/1000, y = etac]
{figures/cube/data/P2.txt};

\plot[mark=triangle*,color=black,solid]
table[x expr = \thisrow{h}/1000, y = err]
{figures/cube/data/P2.txt};

\plot[solid,black,domain=0.05:.25] {.5*x*x*x};

\SlopeTriangle{.6}{-.2}{.1}{-3}{$h^3$}{}

\end{axis}
\end{tikzpicture}
\end{minipage}
\begin{minipage}{.45\linewidth}
\begin{tikzpicture}
\begin{axis}
[
	width  = \linewidth,
	xlabel = {$h$},
	ylabel = {Effectivity indices},
	xmode  = log,
	x dir  = reverse,
	ymin   = 0,
	ymax   = 4,
	ytick = {0,1,2.44948974278317809819},
	yticklabels={0,1,$\sqrt 6$}
]

\plot[mark=*,color=blue,dotted]
table[x expr = \thisrow{h}/1000, y expr = sqrt(6)*\thisrow{etae}/\thisrow{err}]
{figures/cube/data/P2.txt};

\plot[mark=square*,color=red,dashed]
table[x expr = \thisrow{h}/1000, y expr = \thisrow{etac}/\thisrow{err}]
{figures/cube/data/P2.txt};

\plot[solid,black,domain=0.05:1] {1};
\plot[solid,black,domain=0.05:1] {2.44948974278317809819};

\end{axis}
\end{tikzpicture}
\end{minipage}
\subcaption{$p=2$}
\end{minipage}

\caption{Smooth solution example: $h$-convergence}
\label{figure_cube_hconv}
\end{figure}

Next, we perform a ``$p$-convergence'' study where $\hmax$ is fixed, but $p$
increases from $0$ to $6$. The expected exponential convergence rate is observed
on the left-panel of Figure \ref{figure_cube_pconv}. The right-panel of
Figure \ref{figure_cube_pconv} shows that both estimators are reliable and
efficient. This experiment further highlights the $p$-robustness of the
estimator, since the effectivity indices are indeed independent of $p$.

\begin{figure}
\begin{minipage}{\linewidth}
\begin{minipage}{.45\linewidth}
\begin{tikzpicture}
\begin{axis}
[
	width  = \linewidth,
	xlabel = {$p$ ($\hmax=1)$},
	ylabel = {Error and estimators},
	ymode  = log,
]

\plot[mark=*,color=blue,dotted]
table[x = p, y expr = sqrt(6)*\thisrow{etae}]
{figures/cube/data/H1000.txt};

\plot[mark=square*,color=red,dashed]
table[x = p, y = etac]
{figures/cube/data/H1000.txt};

\plot[mark=triangle*,color=black,solid]
table[x = p, y = err]
{figures/cube/data/H1000.txt};

\plot [domain=0:6] {.25*exp(-x)};

\SlopeTriangle{.6}{-.2}{.1}{-1}{$e^{-p}$}{}

\end{axis}
\end{tikzpicture}
\end{minipage}
\begin{minipage}{.45\linewidth}
\begin{tikzpicture}
\begin{axis}
[
	width  = \linewidth,
	xlabel = {$p$ ($\hmax=1)$},
	ylabel = {Effectivity indices},
	ymin   = 0,
	ymax   = 4,
	ytick = {0,1,2.44948974278317809819},
	yticklabels={0,1,$\sqrt 6$}
]

\plot[mark=*,color=blue,dotted]
table[x = p, y expr = sqrt(6)*\thisrow{etae}/\thisrow{err}]
{figures/cube/data/H1000.txt};

\plot[mark=square*,color=red,dashed]
table[x = p, y expr = \thisrow{etac}/\thisrow{err}]
{figures/cube/data/H1000.txt};

\plot[solid,black,domain=0:6] {1};
\plot[solid,black,domain=0:6] {2.44948974278317809819};

\end{axis}
\end{tikzpicture}
\end{minipage}
\subcaption{$\hmax=1$}
\end{minipage}

\begin{minipage}{\linewidth}
\begin{minipage}{.45\linewidth}
\begin{tikzpicture}
\begin{axis}
[
	width  = \linewidth,
	xlabel = {$p$},
	ylabel = {Error and estimators},
	ymode  = log,
]

\plot[mark=*,color=blue,dotted]
table[x = p, y expr = sqrt(6)*\thisrow{etae}]
{figures/cube/data/H0500.txt};

\plot[mark=square*,color=red,dashed]
table[x = p, y = etac]
{figures/cube/data/H0500.txt};

\plot[mark=triangle*,color=black,solid]
table[x = p, y = err]
{figures/cube/data/H0500.txt};

\plot [domain=0:6] {.25*exp(-2*x)};

\SlopeTriangle{.6}{-.2}{.1}{-2}{$e^{-2p}$}{}

\end{axis}
\end{tikzpicture}
\end{minipage}
\begin{minipage}{.45\linewidth}
\begin{tikzpicture}
\begin{axis}
[
	width  = \linewidth,
	xlabel = {$p$},
	ylabel = {Effectivity indices},
	ymin   = 0,
	ymax   = 4,
	ytick = {0,1,2.44948974278317809819},
	yticklabels={0,1,$\sqrt 6$}
]

\plot[mark=*,color=blue,dotted]
table[x = p, y expr = sqrt(6)*\thisrow{etae}/\thisrow{err}]
{figures/cube/data/H0500.txt};

\plot[mark=square*,color=red,dashed]
table[x = p, y expr = \thisrow{etac}/\thisrow{err}]
{figures/cube/data/H0500.txt};

\plot[solid,black,domain=0:6] {1};
\plot[solid,black,domain=0:6] {2.44948974278317809819};

\end{axis}
\end{tikzpicture}
\end{minipage}
\subcaption{$\hmax=1/2$}
\end{minipage}

\begin{minipage}{\linewidth}
\begin{minipage}{.45\linewidth}
\begin{tikzpicture}
\begin{axis}
[
	width  = \linewidth,
	xlabel = {$p$},
	ylabel = {Error and estimators},
	ymode  = log,
]

\plot[mark=*,color=blue,dotted]
table[x = p, y expr = sqrt(6)*\thisrow{etae}]
{figures/cube/data/H0250.txt};

\plot[mark=square*,color=red,dashed]
table[x = p, y = etac]
{figures/cube/data/H0250.txt};

\plot[mark=triangle*,color=black,solid]
table[x = p, y = err]
{figures/cube/data/H0250.txt};

\plot [domain=0:6] {.25*exp(-3*x)};

\SlopeTriangle{.6}{-.2}{.1}{-3}{$e^{-3p}$}{}

\end{axis}
\end{tikzpicture}
\end{minipage}
\begin{minipage}{.45\linewidth}
\begin{tikzpicture}
\begin{axis}
[
	width  = \linewidth,
	xlabel = {$p$},
	ylabel = {Effectivity indices},
	ymin   = 0,
	ymax   = 4,
	ytick = {0,1,2.44948974278317809819},
	yticklabels={0,1,$\sqrt 6$}
]

\plot[mark=*,color=blue,dotted]
table[x = p, y expr = sqrt(6)*\thisrow{etae}/\thisrow{err}]
{figures/cube/data/H0250.txt};

\plot[mark=square*,color=red,dashed]
table[x = p, y expr = \thisrow{etac}/\thisrow{err}]
{figures/cube/data/H0250.txt};

\plot[solid,black,domain=0:6] {1};
\plot[solid,black,domain=0:6] {2.44948974278317809819};

\end{axis}
\end{tikzpicture}
\end{minipage}
\subcaption{$\hmax=1/4$}
\end{minipage}

\caption{Smooth solution example: $p$-convergence}
\label{figure_cube_pconv}
\end{figure}
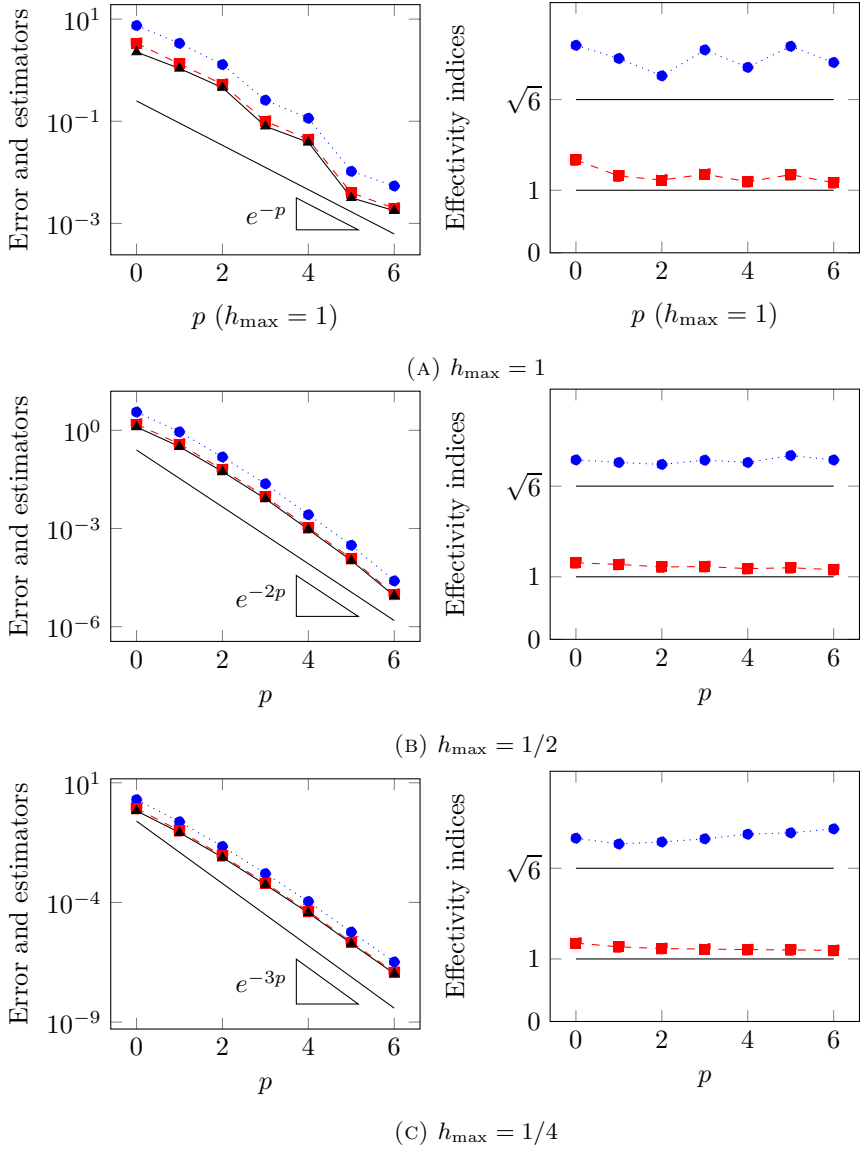

We further note that the effectivity index of the equilibrated estimator
is fairly close to one. Notice that the fact that the effectivity is greater
than one is ensured by Theorem \ref{theorem_equilibrated_estimator}, since the
domain is convex in this example, and we can take $\Clift = 1$ in
\eqref{eq_equilibrated_reliability}. For the edge-based estimator,
the effectivity index is close to, and always greater than, $\sqrt{6}$.
This may hint that the factor $\sqrt{6}$ in \eqref{eq_discrete_reliability}
is spurious, although the author is not aware of a way to suppress it
from a theoretical stand point.

\subsection{Edge singularity in L-type domains}

Given an angle $\phi \in (0,2\pi)$, we consider a domain of the form
$\Omega \eq L \times (0,1)$, where
\begin{equation*}
L \eq \{ \bx = r(\cos(\theta),\sin(\theta)) \in \mathbb R^2; \; |\bx_1|,|\bx_2| < 1,
\quad 0 < \theta < 2\pi-\phi \}.
\end{equation*}
We will in particular consider the cases $\phi=3\pi/4,\pi/2$ and $\pi/8$.
The top faces of these domains are depicted on Figure \ref{figure_ltype_domains}.

\begin{figure}
\begin{minipage}{.30\linewidth}
\begin{tikzpicture}[scale=1.5]
\draw (1,0) -- (1,1) -- (-1,1) -- (-1,-1) -- (0,0) -- cycle;
\end{tikzpicture}
\subcaption{$\phi=3\pi/4$}
\end{minipage}
\begin{minipage}{.30\linewidth}
\begin{tikzpicture}[scale=1.5]
\draw (1,0) -- (1,1) -- (-1,1) -- (-1,-1) -- (0,-1) -- (0,0) -- cycle;
\end{tikzpicture}
\subcaption{$\phi=\pi/2$}
\end{minipage}
\begin{minipage}{.30\linewidth}
\begin{tikzpicture}[scale=1.5]
\draw (1,0) -- (1,1) -- (-1,1) -- (-1,-1) -- (1,-1) -- (1,.-.38268343236508977172) -- (0,0) -- cycle;
\end{tikzpicture}
\subcaption{$\phi=\pi/8$}
\end{minipage}
\caption{L-type domains}
\label{figure_ltype_domains}
\end{figure}
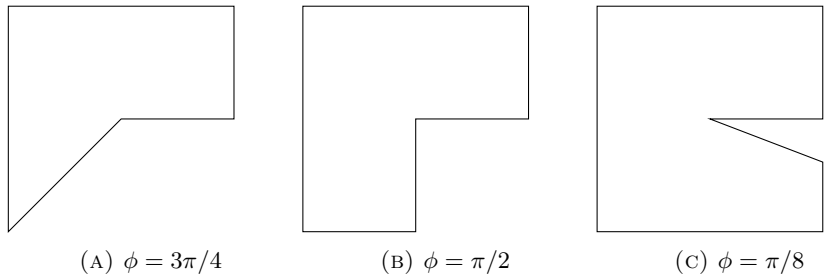

Following \cite{%
chaumontfrelet_ern_vohralik_2021a,%
chaumontfrelet_vohralik_2021a,%
gedicke_geevers_perugia_2019a},
we consider a solution that is singular around the edge $\{(0,0)\} \times (0,1)$.
Specifically, it reads $\BA \eq (0,0,s)$, where
\begin{equation}
s(r,\theta,\bx_3)
\eq
\chi(r)r^\alpha\sin(\alpha\theta)
\end{equation}
with $\alpha \eq \pi/(2\pi-\phi)$ and $\chi$ is a cutoff function employed to satisfy
boundary conditions as in \cite{chaumontfrelet_ern_vohralik_2021a,chaumontfrelet_vohralik_2021a}.

For each angle $\phi$, we perform an adaptive mesh refinement as described above.
In all cases, we start with a mesh generated with $\hmax \eq 2$ and perform three
different refinements for polynomial degrees $p = 0,1$ and $2$.

When $\phi=3\pi/4$, the adaptive process is driven by the cell estimator
$\{\eta_K\}_{K \in \CT_h}$ and the results are reported on Figure
\ref{figure_ltype_small}. As can be seen on the left-panel, the optimal
convergence rates are observed. Notice that for $p=2$, the convergence rate
is not $N_{\rm dofs}^{-(p+1)/3} = N_{\rm dofs}$ because we are using
isotropic elements, see \cite[Section 4.2.3]{apel_1999a} for more details.
Similar to the cube experiment, the effectivity indices of the edge-based
and equilibrated estimators are respectively close to $\sqrt{6}$ and $1$.

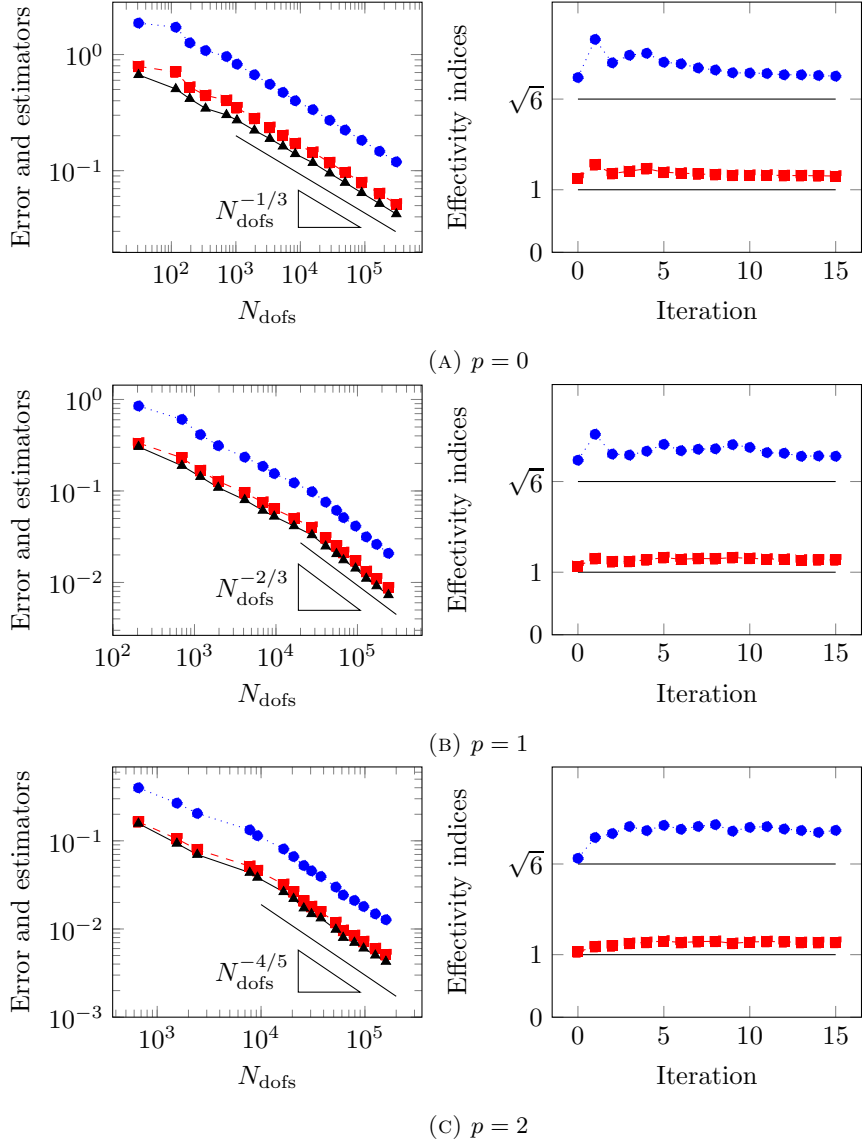
\begin{figure}
\begin{minipage}{\linewidth}
\begin{minipage}{.45\linewidth}
\begin{tikzpicture}
\begin{axis}
[
	width=\linewidth,
	xmode = log,
	ymode = log,
	xlabel = {$N_{\rm dofs}$},
	ylabel = {Error and estimators}
]

\plot[color=blue,mark=*,dotted]
table[x = nr_dofs, y expr = sqrt(6)*\thisrow{este}]
{figures/lbrick_small/data/P0.txt};

\plot[color=red,mark=square*,dashed]
table[x = nr_dofs, y = estc]
{figures/lbrick_small/data/P0.txt};

\plot[color=black,mark=triangle*,solid]
table[x = nr_dofs, y = err]
{figures/lbrick_small/data/P0.txt};

\plot [domain=1e3:3e5] {2*x^(-1./3)};

\SlopeTriangle{.6}{-.2}{.1}{-1./3}{$N_{\rm dofs}^{-1/3}$}{}

\end{axis}
\end{tikzpicture}
\end{minipage}
\begin{minipage}{.45\linewidth}
\begin{tikzpicture}
\begin{axis}
[
	width=\linewidth,
	xlabel={Iteration},
	ylabel={Effectivity indices},
	ymin=0,
	ymax=4,
	ytick={0,1,2.44948974278317809819},
	yticklabels={0,1,$\sqrt{6}$}
]

\plot[color=blue,mark=*,dotted]
table[x = iter, y expr = sqrt(6)*\thisrow{este}/\thisrow{err}]
{figures/lbrick_small/data/P0.txt};

\plot[color=red,mark=square*,dashed]
table[x = iter, y expr = \thisrow{estc}/\thisrow{err}]
{figures/lbrick_small/data/P0.txt};

\plot [domain=0:15] {1};
\plot [domain=0:15] {2.44948974278317809819};

\end{axis}
\end{tikzpicture}
\end{minipage}
\subcaption{$p=0$}
\end{minipage}

\begin{minipage}{\linewidth}
\begin{minipage}{.45\linewidth}
\begin{tikzpicture}
\begin{axis}
[
	width=\linewidth,
	xmode = log,
	ymode = log,
	xlabel = {$N_{\rm dofs}$},
	ylabel = {Error and estimators}
]

\plot[color=blue,mark=*,dotted]
table[x = nr_dofs, y expr = sqrt(6)*\thisrow{este}]
{figures/lbrick_small/data/P1.txt};

\plot[color=red,mark=square*,dashed]
table[x = nr_dofs, y = estc]
{figures/lbrick_small/data/P1.txt};

\plot[color=black,mark=triangle*,solid]
table[x = nr_dofs, y = err]
{figures/lbrick_small/data/P1.txt};

\plot [domain=2e4:3e5] {20*x^(-2./3)};

\SlopeTriangle{.6}{-.2}{.1}{-2./3}{$N_{\rm dofs}^{-2/3}$}{}

\end{axis}
\end{tikzpicture}
\end{minipage}
\begin{minipage}{.45\linewidth}
\begin{tikzpicture}
\begin{axis}
[
	width=\linewidth,
	xlabel={Iteration},
	ylabel={Effectivity indices},
	ymin=0,
	ymax=4,
	ytick={0,1,2.44948974278317809819},
	yticklabels={0,1,$\sqrt{6}$}
]

\plot[color=blue,mark=*,dotted]
table[x = iter, y expr = sqrt(6)*\thisrow{este}/\thisrow{err}]
{figures/lbrick_small/data/P1.txt};

\plot[color=red,mark=square*,dashed]
table[x = iter, y expr = \thisrow{estc}/\thisrow{err}]
{figures/lbrick_small/data/P1.txt};

\plot [domain=0:15] {1};
\plot [domain=0:15] {2.44948974278317809819};

\end{axis}
\end{tikzpicture}
\end{minipage}
\subcaption{$p=1$}
\end{minipage}

\begin{minipage}{\linewidth}
\begin{minipage}{.45\linewidth}
\begin{tikzpicture}
\begin{axis}
[
	width=\linewidth,
	xmode = log,
	ymode = log,
	xlabel = {$N_{\rm dofs}$},
	ylabel = {Error and estimators}
]

\plot[color=blue,mark=*,dotted]
table[x = nr_dofs, y expr = sqrt(6)*\thisrow{este}]
{figures/lbrick_small/data/P2.txt};

\plot[color=red,mark=square*,dashed]
table[x = nr_dofs, y = estc]
{figures/lbrick_small/data/P2.txt};

\plot[color=black,mark=triangle*,solid]
table[x = nr_dofs, y = err]
{figures/lbrick_small/data/P2.txt};

\plot [domain=1e4:2e5] {30*x^(-4./5)};

\SlopeTriangle{.6}{-.2}{.1}{-4./5}{$N_{\rm dofs}^{-4/5}$}{}

\end{axis}
\end{tikzpicture}
\end{minipage}
\begin{minipage}{.45\linewidth}
\begin{tikzpicture}
\begin{axis}
[
	width=\linewidth,
	xlabel={Iteration},
	ylabel={Effectivity indices},
	ymin=0,
	ymax=4,
	ytick={0,1,2.44948974278317809819},
	yticklabels={0,1,$\sqrt{6}$}
]

\plot[color=blue,mark=*,dotted]
table[x = iter, y expr = sqrt(6)*\thisrow{este}/\thisrow{err}]
{figures/lbrick_small/data/P2.txt};

\plot[color=red,mark=square*,dashed]
table[x = iter, y expr = \thisrow{estc}/\thisrow{err}]
{figures/lbrick_small/data/P2.txt};

\plot [domain=0:15] {1};
\plot [domain=0:15] {2.44948974278317809819};

\end{axis}
\end{tikzpicture}
\end{minipage}
\subcaption{$p=2$}
\end{minipage}
\caption{L-type experiment with $\phi=3\pi/4$.}
\label{figure_ltype_small}
\end{figure}

We use the edge estimator $\{\eta_\edge\}_{\edge \in \CE_h}$
to drive the refinements when $\phi = \pi/2$. The results are
similar to the previous case, and are reported on Figure
\ref{figure_ltype_medium}.

\begin{figure}
\begin{minipage}{\linewidth}
\begin{minipage}{.45\linewidth}
\begin{tikzpicture}
\begin{axis}
[
	width=\linewidth,
	xmode = log,
	ymode = log,
	xlabel = {$N_{\rm dofs}$},
	ylabel = {Error and estimators}
]

\plot[color=blue,mark=*,dotted]
table[x = nr_dofs, y expr = sqrt(6)*\thisrow{este}]
{figures/lbrick_medium/data/P0.txt};

\plot[color=red,mark=square*,dashed]
table[x = nr_dofs, y = estc]
{figures/lbrick_medium/data/P0.txt};

\plot[color=black,mark=triangle*,solid]
table[x = nr_dofs, y = err]
{figures/lbrick_medium/data/P0.txt};

\plot [domain=2e3:1e5] {5*x^(-1./3)};

\SlopeTriangle{.6}{-.2}{.1}{-1./3}{$N_{\rm dofs}^{-1/3}$}{}

\end{axis}
\end{tikzpicture}
\end{minipage}
\begin{minipage}{.45\linewidth}
\begin{tikzpicture}
\begin{axis}
[
	width=\linewidth,
	xlabel={Iteration},
	ylabel={Effectivity indices},
	ymin=0,
	ymax=4,
	ytick={0,1,2.44948974278317809819},
	yticklabels={0,1,$\sqrt{6}$}
]

\plot[color=blue,mark=*,dotted]
table[x = iter, y expr = sqrt(6)*\thisrow{este}/\thisrow{err}]
{figures/lbrick_medium/data/P0.txt};

\plot[color=red,mark=square*,dashed]
table[x = iter, y expr = \thisrow{estc}/\thisrow{err}]
{figures/lbrick_medium/data/P0.txt};

\plot [domain=0:15] {1};
\plot [domain=0:15] {2.44948974278317809819};

\end{axis}
\end{tikzpicture}
\end{minipage}
\subcaption{$p=0$}
\end{minipage}

\begin{minipage}{\linewidth}
\begin{minipage}{.45\linewidth}
\begin{tikzpicture}
\begin{axis}
[
	width=\linewidth,
	xmode = log,
	ymode = log,
	xlabel = {$N_{\rm dofs}$},
	ylabel = {Error and estimators}
]

\plot[color=blue,mark=*,dotted]
table[x = nr_dofs, y expr = sqrt(6)*\thisrow{este}]
{figures/lbrick_medium/data/P1.txt};

\plot[color=red,mark=square*,dashed]
table[x = nr_dofs, y = estc]
{figures/lbrick_medium/data/P1.txt};

\plot[color=black,mark=triangle*,solid]
table[x = nr_dofs, y = err]
{figures/lbrick_medium/data/P1.txt};

\plot [domain=1e4:1e5] {80*x^(-2./3)};

\SlopeTriangle{.6}{-.2}{.1}{-2./3}{$N_{\rm dofs}^{-2/3}$}{}

\end{axis}
\end{tikzpicture}
\end{minipage}
\begin{minipage}{.45\linewidth}
\begin{tikzpicture}
\begin{axis}
[
	width=\linewidth,
	xlabel={Iteration},
	ylabel={Effectivity indices},
	ymin=0,
	ymax=4,
	ytick={0,1,2.44948974278317809819},
	yticklabels={0,1,$\sqrt{6}$}
]

\plot[color=blue,mark=*,dotted]
table[x = iter, y expr = sqrt(6)*\thisrow{este}/\thisrow{err}]
{figures/lbrick_medium/data/P1.txt};

\plot[color=red,mark=square*,dashed]
table[x = iter, y expr = \thisrow{estc}/\thisrow{err}]
{figures/lbrick_medium/data/P1.txt};

\plot [domain=0:15] {1};
\plot [domain=0:15] {2.44948974278317809819};

\end{axis}
\end{tikzpicture}
\end{minipage}
\subcaption{$p=1$}
\end{minipage}

\begin{minipage}{\linewidth}
\begin{minipage}{.45\linewidth}
\begin{tikzpicture}
\begin{axis}
[
	width=\linewidth,
	xmode = log,
	ymode = log,
	xlabel = {$N_{\rm dofs}$},
	ylabel = {Error and estimators}
]

\plot[color=blue,mark=*,dotted]
table[x = nr_dofs, y expr = sqrt(6)*\thisrow{este}]
{figures/lbrick_medium/data/P2.txt};

\plot[color=red,mark=square*,dashed]
table[x = nr_dofs, y = estc]
{figures/lbrick_medium/data/P2.txt};

\plot[color=black,mark=triangle*,solid]
table[x = nr_dofs, y = err]
{figures/lbrick_medium/data/P2.txt};

\plot [domain=5e3:1e5] {50*x^(-2./3)};

\SlopeTriangle{.6}{-.2}{.1}{-2./3}{$N_{\rm dofs}^{-2/3}$}{}

\end{axis}
\end{tikzpicture}
\end{minipage}
\begin{minipage}{.45\linewidth}
\begin{tikzpicture}
\begin{axis}
[
	width=\linewidth,
	xlabel={Iteration},
	ylabel={Effectivity indices},
	ymin=0,
	ymax=4,
	ytick={0,1,2.44948974278317809819},
	yticklabels={0,1,$\sqrt{6}$}
]

\plot[color=blue,mark=*,dotted]
table[x = iter, y expr = sqrt(6)*\thisrow{este}/\thisrow{err}]
{figures/lbrick_medium/data/P2.txt};

\plot[color=red,mark=square*,dashed]
table[x = iter, y expr = \thisrow{estc}/\thisrow{err}]
{figures/lbrick_medium/data/P2.txt};

\plot [domain=0:15] {1};
\plot [domain=0:15] {2.44948974278317809819};

\end{axis}
\end{tikzpicture}
\end{minipage}
\subcaption{$p=2$}
\end{minipage}

\caption{L-type experiment with $\phi=\pi/2$.}
\label{figure_ltype_medium}
\end{figure}

Figure \ref{figure_ltype_large} illustrates the last case where $\phi=\pi/8$,
and the adaptive process is driven by the cell estimator. The results are
again similar, except that the use of isotropic elements also reduces the optimal
convergence rate when $p=1$, as expected \cite{apel_1999a}.

\begin{figure}
\begin{minipage}{.45\linewidth}
\begin{tikzpicture}
\begin{axis}
[
	width=\linewidth,
	xmode = log,
	ymode = log,
	xlabel = {$N_{\rm dofs}$},
	ylabel = {Error and estimators}
]

\plot[color=blue,mark=*,dotted]
table[x = nr_dofs, y expr = sqrt(6)*\thisrow{este}]
{figures/lbrick_large/data/P0.txt};

\plot[color=red,mark=square*,dashed]
table[x = nr_dofs, y = estc]
{figures/lbrick_large/data/P0.txt};

\plot[color=black,mark=triangle*,solid]
table[x = nr_dofs, y = err]
{figures/lbrick_large/data/P0.txt};

\plot [domain=5e3:3e5] {4*x^(-1./3)};

\SlopeTriangle{.6}{-.2}{.1}{-1./3}{$N_{\rm dofs}^{-1/3}$}{}

\end{axis}
\end{tikzpicture}
\end{minipage}
\begin{minipage}{.45\linewidth}
\begin{tikzpicture}
\begin{axis}
[
	width=\linewidth,
	xlabel={Iteration},
	ylabel={Effectivity indices},
	ymin=0,
	ymax=4,
	ytick={0,1,2.44948974278317809819},
	yticklabels={0,1,$\sqrt{6}$}
]

\plot[color=blue,mark=*,dotted]
table[x = iter, y expr = sqrt(6)*\thisrow{este}/\thisrow{err}]
{figures/lbrick_large/data/P0.txt};

\plot[color=red,mark=square*,dashed]
table[x = iter, y expr = \thisrow{estc}/\thisrow{err}]
{figures/lbrick_large/data/P0.txt};

\plot [domain=0:15] {1};
\plot [domain=0:15] {2.44948974278317809819};

\end{axis}
\end{tikzpicture}
\end{minipage}

\begin{minipage}{.45\linewidth}
\begin{tikzpicture}
\begin{axis}
[
	width=\linewidth,
	xmode = log,
	ymode = log,
	xlabel = {$N_{\rm dofs}$},
	ylabel = {Error and estimators}
]

\plot[color=blue,mark=*,dotted]
table[x = nr_dofs, y expr = sqrt(6)*\thisrow{este}]
{figures/lbrick_large/data/P1.txt};

\plot[color=red,mark=square*,dashed]
table[x = nr_dofs, y = estc]
{figures/lbrick_large/data/P1.txt};

\plot[color=black,mark=triangle*,solid]
table[x = nr_dofs, y = err]
{figures/lbrick_large/data/P1.txt};

\plot [domain=1e4:2e5] {15*x^(-8./15)};

\SlopeTriangle{.6}{-.2}{.1}{-8./15}{$N_{\rm dofs}^{-8/15}$}{}

\end{axis}
\end{tikzpicture}
\end{minipage}
\begin{minipage}{.45\linewidth}
\begin{tikzpicture}
\begin{axis}
[
	width=\linewidth,
	xlabel={Iteration},
	ylabel={Effectivity indices},
	ymin=0,
	ymax=4,
	ytick={0,1,2.44948974278317809819},
	yticklabels={0,1,$\sqrt{6}$}
]

\plot[color=blue,mark=*,dotted]
table[x = iter, y expr = sqrt(6)*\thisrow{este}/\thisrow{err}]
{figures/lbrick_large/data/P1.txt};

\plot[color=red,mark=square*,dashed]
table[x = iter, y expr = \thisrow{estc}/\thisrow{err}]
{figures/lbrick_large/data/P1.txt};

\plot [domain=0:15] {1};
\plot [domain=0:15] {2.44948974278317809819};

\end{axis}
\end{tikzpicture}
\end{minipage}

\begin{minipage}{.45\linewidth}
\begin{tikzpicture}
\begin{axis}
[
	width=\linewidth,
	xmode = log,
	ymode = log,
	xlabel = {$N_{\rm dofs}$},
	ylabel = {Error and estimators}
]

\plot[color=blue,mark=*,dotted]
table[x = nr_dofs, y expr = sqrt(6)*\thisrow{este}]
{figures/lbrick_large/data/P2.txt};

\plot[color=red,mark=square*,dashed]
table[x = nr_dofs, y = estc]
{figures/lbrick_large/data/P2.txt};

\plot[color=black,mark=triangle*,solid]
table[x = nr_dofs, y = err]
{figures/lbrick_large/data/P2.txt};

\plot [domain=5e4:3e5] {15*x^(-8./15)};

\SlopeTriangle{.6}{-.2}{.1}{-8./15}{$N_{\rm dofs}^{-8/15}$}{}

\end{axis}
\end{tikzpicture}
\end{minipage}
\begin{minipage}{.45\linewidth}
\begin{tikzpicture}
\begin{axis}
[
	width=\linewidth,
	xlabel={Iteration},
	ylabel={Effectivity indices},
	ymin=0,
	ymax=4,
	ytick={0,1,2.44948974278317809819},
	yticklabels={0,1,$\sqrt{6}$}
]

\plot[color=blue,mark=*,dotted]
table[x = iter, y expr = sqrt(6)*\thisrow{este}/\thisrow{err}]
{figures/lbrick_large/data/P2.txt};

\plot[color=red,mark=square*,dashed]
table[x = iter, y expr = \thisrow{estc}/\thisrow{err}]
{figures/lbrick_large/data/P2.txt};

\plot [domain=0:15] {1};
\plot [domain=0:15] {2.44948974278317809819};

\end{axis}
\end{tikzpicture}
\end{minipage}

\caption{L-type experiment with $\phi=\pi/8$.}
\label{figure_ltype_large}
\end{figure}

Interestingly, we observe that the effectivity indices $\effe$ and $\effc$
respectively stays above $\sqrt 6$ and $1$ (apart from the first iterations
when $\phi = \pi/2$, which is due to data oscillation). Notice that our theory does
not cover this estimate, as it requires the constant $\Clift$, which is not
$1$ in this case. The fact that the effictivity indices remains independent
of the angle $\phi$ hints that the $\Clift$ may not be compulsory to obtain guaranteed
estimates.

\subsection{Corner singularity in the Fichera cube}

In this last experiment, we consider the Fichera domain
$\Omega \eq (-1,1)^3 \setminus (0,1)^3$. We select the
right-hand side $\BJ \eq (1,1,0)$. Starting from an initial
mesh generated with $\hmax \eq 2$, we consider to adaptive
refinement processes. We first set $p=0$ and employ the
edge-based estimator to drive the adaptive process and then,
we use the cell-based estimator with $p=1$.

Figure \ref{figure_fichera} presents the results. As for the 
L-type domain example, we obtain the optimal convergence rates,
and the estimators seem to provide guaranteed upper bounds even
when $\Clift$ is omitted.

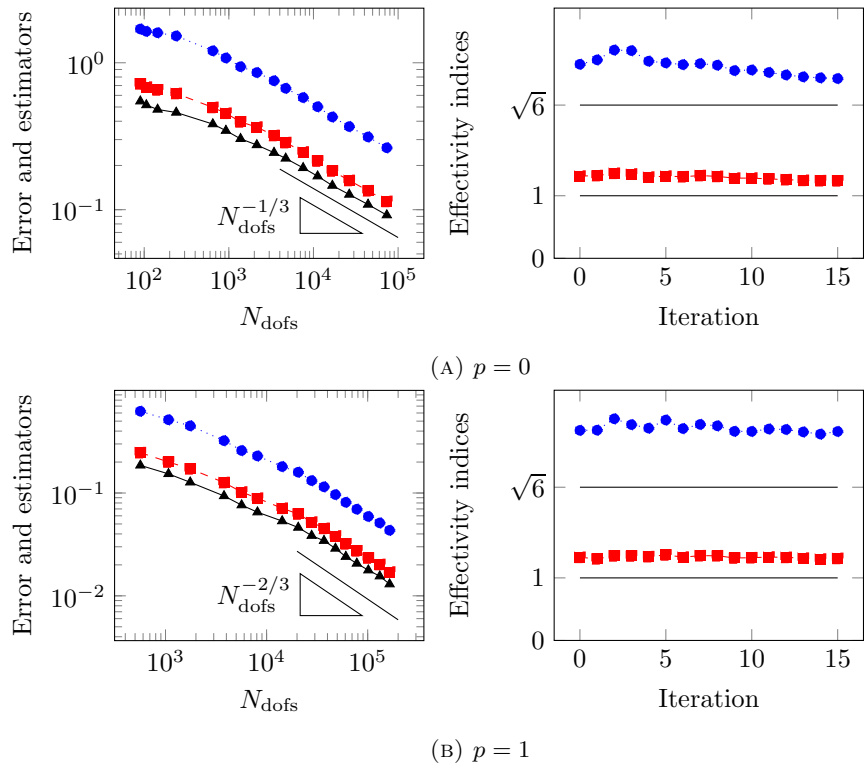
\begin{figure}
\begin{minipage}{\linewidth}
\begin{minipage}{.45\linewidth}
\begin{tikzpicture}
\begin{axis}
[
	width=\linewidth,
	xmode = log,
	ymode = log,
	xlabel = {$N_{\rm dofs}$},
	ylabel = {Error and estimators}
]

\plot[color=blue,mark=*,dotted]
table[x = nr_dofs, y expr = sqrt(6)*\thisrow{este}]
{figures/fichera/data/P0.txt};

\plot[color=red,mark=square*,dashed]
table[x = nr_dofs, y = estc]
{figures/fichera/data/P0.txt};

\plot[color=black,mark=triangle*,solid]
table[x = nr_dofs, y = err]
{figures/fichera/data/P0.txt};

\plot [domain=4e3:1e5] {3*x^(-1./3)};

\SlopeTriangle{.6}{-.2}{.1}{-1./3}{$N_{\rm dofs}^{-1/3}$}{}

\end{axis}
\end{tikzpicture}
\end{minipage}
\begin{minipage}{.45\linewidth}
\begin{tikzpicture}
\begin{axis}
[
	width=\linewidth,
	xlabel={Iteration},
	ylabel={Effectivity indices},
	ymin=0,
	ymax=4,
	ytick={0,1,2.44948974278317809819},
	yticklabels={0,1,$\sqrt{6}$}
]

\plot[color=blue,mark=*,dotted]
table[x = iter, y expr = sqrt(6)*\thisrow{este}/\thisrow{err}]
{figures/fichera/data/P0.txt};

\plot[color=red,mark=square*,dashed]
table[x = iter, y expr = \thisrow{estc}/\thisrow{err}]
{figures/fichera/data/P0.txt};

\plot [domain=0:15] {1};
\plot [domain=0:15] {2.44948974278317809819};

\end{axis}
\end{tikzpicture}
\end{minipage}
\subcaption{$p=0$}
\end{minipage}

\begin{minipage}{\linewidth}
\begin{minipage}{.45\linewidth}
\begin{tikzpicture}
\begin{axis}
[
	width=\linewidth,
	xmode = log,
	ymode = log,
	xlabel = {$N_{\rm dofs}$},
	ylabel = {Error and estimators}
]

\plot[color=blue,mark=*,dotted]
table[x = nr_dofs, y expr = sqrt(6)*\thisrow{este}]
{figures/fichera/data/P1.txt};

\plot[color=red,mark=square*,dashed]
table[x = nr_dofs, y = estc]
{figures/fichera/data/P1.txt};

\plot[color=black,mark=triangle*,solid]
table[x = nr_dofs, y = err]
{figures/fichera/data/P1.txt};

\plot [domain=2e4:2e5] {20*x^(-2./3)};

\SlopeTriangle{.6}{-.2}{.1}{-2./3}{$N_{\rm dofs}^{-2/3}$}{}

\end{axis}
\end{tikzpicture}
\end{minipage}
\begin{minipage}{.45\linewidth}
\begin{tikzpicture}
\begin{axis}
[
	width=\linewidth,
	xlabel={Iteration},
	ylabel={Effectivity indices},
	ymin=0,
	ymax=4,
	ytick={0,1,2.44948974278317809819},
	yticklabels={0,1,$\sqrt{6}$}
]

\plot[color=blue,mark=*,dotted]
table[x = iter, y expr = sqrt(6)*\thisrow{este}/\thisrow{err}]
{figures/fichera/data/P1.txt};

\plot[color=red,mark=square*,dashed]
table[x = iter, y expr = \thisrow{estc}/\thisrow{err}]
{figures/fichera/data/P1.txt};

\plot [domain=0:15] {1};
\plot [domain=0:15] {2.44948974278317809819};

\end{axis}
\end{tikzpicture}
\end{minipage}
\subcaption{$p=1$}
\end{minipage}

\caption{Fichera domain example}
\label{figure_fichera}
\end{figure}

\section{Conclusion}
\label{section_conclusion}

We propose two a posteriori error estimators that are motivated by
a novel Prager--Synge identity for the curl--curl problem. Both estimators
are polynomial-degree-robust, and rely on divergence-constrained minimization
problems over edge patches. When the domain is convex, these estimators also
provide guaranteed and fully computable upper bounds. In the general case
however, the reliability estimate involves a constant $\Clift$ related to
regular decompositions of $\BH(\ccurl)$ fields that is not easily computable in practice.

We present a set of numerical examples which illustrates the key theoretical
findings: both estimators are reliable and efficient with $p$-robust constants,
and provide guaranteed upper bounds in convex domains. In addition, when the domain
is not convex, these numerical tests suggest that the estimators can still be employed
without the constant $\Clift$ to provide a guaranteed upper bound. While we are not
able to prove this result, we provide some theoretical reasons why it may be the case.
We also employ both estimators to drive adaptive mesh refinements, and obtain optimal
convergence rates in domains featuring re-entrant edges and corners.

In practice, it seems that the cell-based equilibrated estimator should be preferred,
as it provides improved results as compared to the edge-based estimator, only at the
price of a moderate additional complexity in the implementation. From a theoretical viewpoint
however, the edge-based estimator may be preferable since the associated efficiency estimates
involve smaller mesh patches, which may be of interest to design adaptive refinement algorithms
that provably converge with optimal rates.

\bibliographystyle{amsplain}
\bibliography{bibliography}

\end{document}